\numberwithin{equation}{section}
\theoremstyle{plain}
\newtheorem{thm}{Theorem}[section]
\newtheorem{theorem}[thm]{Theorem}
\newtheorem{lemma}[thm]{Lemma}
\newtheorem{prop}[thm]{Proposition}
\newtheorem{cor}[thm]{Corollary}
\newcommand\theoref{Theorem~\ref}
\newcommand\lemref{Lemma~\ref}
\newcommand\propref{Proposition~\ref}
\newcommand\corref{Corollary~\ref}
\newcommand\defref{Definition~\ref}
\newcommand\Zp{\Z_p}
\theoremstyle{definition}
\newtheorem{definition}[thm]{Definition}
\newtheorem{rem}[thm]{Remark}
\newtheorem{ex}[thm]{Example}
\newtheorem{question}[thm]{Question}
\DeclareMathOperator{\sys}{{\rm sys}}
\DeclareMathOperator{\cat}{{\mbox{\rm cat$_{\rm LS}$}}}
\def\Im{\protect\operatorname{Im}}
\def\Hom{\protect\operatorname{Hom}}
\def\pt{\protect\operatorname{pt}}
\def\wgt{\protect\operatorname{wgt}}
\def\cwgt{\protect\operatorname{cwgt}}
\def\cd{\protect\operatorname{cd}}
\def\ber{\mathfrak b}
\newcommand \pa[2]{\frac{\partial #1}{\partial #2}}
\def\gf{\varphi}
\def\ga{\alpha}
\def\scr{\mathcal}
\def\A{{\scr A}}
\def\B{{\scr B}}
\def\O{{\scr O}}
\def\Z{{\mathbb Z}}
\def\R{{\mathbb R}}
\def\zp{\Z[\pi]}
\def\1{\hbox{\rm\rlap {1}\hskip.03in{\rom I}}}
\def\Bbbone{{\rm1\mathchoice{\kern-0.25em}{\kern-0.25em}
{\kern-0.2em}{\kern-0.2em}I}}
\def\pa{\partial}
\def\wt{\widetilde}
\def\m{\medskip}
\def\ov{\overline}
\long\def\forget#1\forgotten{} %
\newcommand\ver[1]{\marginpar{\tiny Changed in Ver \VER}}
\date{\today}
\begin{document}

\title[Small values of LS category for manifolds]{Small values of the
Lusternik-Schnirelmann category for manifolds}

\author[A.~Dranishnikov]{Alexander N. Dranishnikov$^{1}$} %
\thanks{$^{1}$Supported by NSF, grant DMS-0604494}

\author[M.~Katz]{Mikhail G. Katz$^{2}$} %
\thanks{$^{2}$Supported by the Israel Science Foundation (grants 84/03
and 1294/06) and the BSF (grant 2006393)}

\author[Yu.~Rudyak]{Yuli B. Rudyak$^{3}$}%
\thanks{$^{3}$Supported by NSF, grant 0406311}

\address{Alexander N. Dranishnikov, Department of Mathematics, University
of Florida, 358 Little Hall, Gainesville, FL 32611-8105, USA}
\email{dranish@math.ufl.edu}

\address{Mikhail G. Katz, Department of Mathematics, Bar Ilan University,
Ramat Gan 52900 Israel}
\email{katzmik ``at'' math.biu.ac.il}

\address{Yuli B. Rudyak, Department of Mathematics, University
of Florida, 358 Little Hall, Gainesville, FL 32611-8105, USA}
\email{rudyak@math.ufl.edu}

\subjclass[2000]
{Primary 55M30; 
Secondary 53C23,  
57N65  
}

\keywords{Category weight, cohomological dimension, detecting element,
essential manifolds, free fundamental group, Lusternik-Schnirelmann
category, Massey product, self-linking class, systolic category}

\begin{abstract}
We prove that manifolds of Lusternik-Schnirelmann category~$2$
necessarily have free fundamental group.  We thus settle a 1992
conjecture of Gomez-Larra\~naga and Gonzalez-Acu\~na, by generalizing
their result in dimension~$3$, to all higher dimensions.  We also
obtain some general results on the relations between the fundamental
group of a closed manifold~$M$, the dimension of~$M$, and the
Lusternik-Schnirelmann category of~$M$, and relate the latter to the
systolic category of~$M$.
\end{abstract}

\maketitle
\tableofcontents

\section{Introduction}

We follow the normalization of the Lusternik-Schnirelmann category (LS
category) used in the recent monograph \cite{CLOT} (see
Section~\ref{three} for a definition).  Spaces of LS category~$0$ are
contractible, while a closed manifold of LS category~$1$ is homotopy
equivalent (and hence homeomorphic) to a sphere.

The characterization of closed manifolds of LS category~$2$ was
initiated in 1992 by J.~Gomez-Larra\~naga and F.~Gonzalez-Acu\~na
\cite{GoGo} (see also~\cite{OR}), who proved the following result on
closed manifolds~$M$ of dimension~$3$: the fundamental group of~$M$ is
free and nontrivial if and only if its LS category is~$2$.
Furthermore, they conjectured that the fundamental group of every
closed~$n$-manifold,~$n\ge 3$, of LS category~$2$ is necessarily
free~\cite[Remark, p.~797]{GoGo}.  Our interest in this natural
problem was also stimulated in part by recent work on the comparison
of the LS category and the systolic category~\cite{KR1, KR2, SGT},
which was inspired, in turn, by M.~Gromov's systolic inequalities
\cite{Gr1, Gr2, Gr3, Gr4}.

In the present text we prove this 1992 conjecture.  Recall that all
closed surfaces different from~$S^2$ are of LS category 2.

\begin{theorem}
\label{11}
A closed connected manifold of LS category~$2$ either is a surface, or
has free fundamental group.
\end{theorem}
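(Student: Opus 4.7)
I would argue by contradiction. Let $n = \dim M$. If $n \leq 2$ there is nothing to prove (a closed connected $n$-manifold with $n \leq 2$ is either $S^1$, in which case $\pi_1$ is free, or a surface), so assume $n \geq 3$, and suppose $\pi := \pi_1(M)$ is not free. By the Stallings--Swan theorem, non-freeness of $\pi$ is equivalent to $\cd_\Z \pi \geq 2$, so there exists a nonzero class $\alpha \in H^2(B\pi; A)$ for some $\pi$-module $A$. Let $f \colon M \to B\pi$ denote the classifying map; the goal is to force $\cat(M) \geq 3$, contradicting the standing hypothesis.

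The natural split is according to whether $M$ is essential, i.e.\ whether the image of the fundamental class in $H_n(B\pi; \Z^w)$ under $f_*$ is nonzero (where $\Z^w$ is the orientation module). In the essential case, a detecting class $\omega \in H^n(B\pi; \Z^w)$ with $\langle f^*\omega, [M]\rangle \neq 0$ should satisfy $\wgt(f^*\omega) \geq n \geq 3$: the detecting class cannot vanish on any union of three categorical subsets, because pulling back from $B\pi$ reflects the fact that $f^*\omega$ ``sees'' the full fundamental group. Since $\wgt \leq \cat$ (Rudyak), this gives $\cat(M) \geq 3$ directly.

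The difficult case --- and the one I expect to be the main obstacle --- is the inessential one, $f_*[M] = 0$, where no top-degree class pulled back from $B\pi$ detects $[M]$. Here one must construct a higher-order invariant. The strategy is to combine $f^*\alpha \in H^2(M;A)$ (which has $\wgt \geq 2$) with its Poincar\'e--Lefschetz dual codimension-$2$ class to build a secondary cohomology operation: either a triple Massey product $\langle f^*\alpha, \xi, \eta \rangle$ or a self-linking class of the Poincar\'e dual cycle of $f^*\alpha$. This secondary class lies in $H^n(M; A')$ and, by the additivity of category weight under Massey products, has weight at least~$3$, which again yields $\cat(M) \geq 3$.

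The technical heart of the proof is therefore showing that the chosen Massey product (equivalently, self-linking class) is actually \emph{nonzero}. Nontriviality should follow from the fact that $\cd \pi \geq 2$ forces a corresponding Massey product in $H^*(B\pi)$ to be nontrivial (essentially because the class $\alpha$ cannot be decomposed as a cup product of $H^1$-classes when $\pi$ is not free), together with the fact that the indeterminacy of the product vanishes on $M$ precisely because $\cat(M) = 2$ kills all honest length-three cup products. Verifying these claims carefully --- in the orientable and non-orientable settings, and allowing torsion in $\pi$ --- is where I expect the bulk of the work to lie.
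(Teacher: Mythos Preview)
Your opening is exactly right: Stallings--Swan gives $\cd\pi\ge 2$, hence a nonzero $u\in H^2(K(\pi,1);\A)$ for some local system $\A$, and $f^*u\in H^2(M;f^*\A)$ is nonzero with $\wgt(f^*u)\ge 2$.  But from this point on you overcomplicate matters, and the inessential branch has a real gap.

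The paper does \emph{not} split into essential and inessential cases, and it uses no Massey products or self-linking classes at all.  The missing observation is this: once you have a nonzero class $f^*u\in H^2(M;f^*\A)$, Poincar\'e duality with twisted coefficients gives a nonzero class
\[
a=[M]\cap f^*u\in H_{n-2}(M;\O\otimes f^*\A),
\]
and then one only needs \emph{some} cohomology class $v\in H^{n-2}(M;\B)$ with $a\cap v\ne 0$.  Such a $v$ always exists, for a suitable local coefficient system $\B$: take $B=C_{n-2}(\wt M)/\Im\partial_{n-1}$ and let $v$ be the class of the quotient map $C_{n-2}(\wt M)\to B$, viewed as a cocycle.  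This elementary ``universal evaluation'' lemma (Proposition~2.3 in the paper) replaces your entire secondary-operation machinery.  Then $[M]\cap(f^*u\cup v)=a\cap v\ne 0$, so $f^*u\cup v\ne 0$, and weight additivity gives $\wgt(f^*u\cup v)\ge 2+1=3$, hence $\cat M\ge 3$.

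Your Massey-product plan, by contrast, is not a proof sketch but a hope.  You never specify $\xi$ and $\eta$, never verify the definedness conditions $f^*\alpha\cup\xi=0$ and $\xi\cup\eta=0$, and your nontriviality heuristic (``$\cd\pi\ge 2$ forces a corresponding Massey product in $H^*(B\pi)$ to be nontrivial'') is simply false in general: if $\cd\pi=2$ then $H^k(B\pi;-)=0$ for $k\ge 3$, so every Massey product landing above degree~$2$ in $B\pi$ vanishes.  The argument must therefore take place entirely in $M$, and you give no mechanism for producing a nonzero triple product there.  The essential/inessential dichotomy is a red herring; the cup-product-plus-local-coefficients argument handles both cases uniformly.
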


\begin{cor}
\label{12b}
Every manifold~$M^n, n\geq 3$, with non-free fundamental group
satisfies~$\cat(M)\geq 3$.
\end{cor}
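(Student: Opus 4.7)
The plan is to derive this corollary directly from Theorem~\ref{11} by eliminating the possibilities $\cat(M) \in \{0,1,2\}$ one by one, under the assumption (implicit from context) that $M$ is closed and connected.

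First I would dispose of the trivial cases. If $\cat(M) = 0$, then $M$ is contractible, so $\pi_1(M)$ is the trivial group, which is free. If $\cat(M) = 1$, then, as noted in the opening paragraph of Section~\ref{three}'s discussion (``a closed manifold of LS category~$1$ is homotopy equivalent to a sphere''), $M$ is homotopy equivalent to $S^n$; since $n \geq 3$, we get $\pi_1(M) = 1$, again free. In both sub-cases the hypothesis of non-free fundamental group is contradicted.

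The remaining case $\cat(M) = 2$ is where Theorem~\ref{11} is invoked. That theorem splits into two alternatives: either $M$ is a closed surface, or $\pi_1(M)$ is free. The first alternative is excluded by the dimension hypothesis $n \geq 3$, and the second directly contradicts the non-freeness assumption. Hence $\cat(M) \geq 3$, as required.

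I do not expect any genuine obstacle here; the corollary is essentially a restatement of the contrapositive of Theorem~\ref{11} together with the elementary classification of manifolds of category~$0$ and~$1$. The only thing to watch is the bookkeeping for small values of $\cat$, so that the full range $\cat(M) \in \{0,1,2\}$ is correctly ruled out rather than only $\cat(M) = 2$.
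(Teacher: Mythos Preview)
Your argument is correct and is exactly the intended derivation: Corollary~\ref{12b} is simply the contrapositive of Theorem~\ref{11} in dimensions~$\geq 3$, once the elementary cases $\cat M \in \{0,1\}$ are disposed of. The only minor slip is bibliographic --- the facts about closed manifolds of category~$0$ and~$1$ are recalled in the Introduction, not in Section~\ref{three}.
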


We found that there is no restriction on the fundamental group
for closed manifolds of LS category 3. In particular we proved
the following.
\begin{theorem}
\label{t:realization}
Given a finitely presented group~$\pi$ and non-negative integers~$k,
l$, there exists a closed manifold~$M$ such that~$\pi_1(M)=\pi$,
while~$\cat M=3+k$ and~$\dim M=5+2k+l$.  Furthermore, if~$\pi$ is not
free, then~$M$ can be chosen~$4$-dimensional with~$\cat M=3$.
\end{theorem}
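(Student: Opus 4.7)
The plan addresses the two parts of the theorem in sequence. For the $4$-dimensional refinement (when $\pi$ is non-free), I would start from a presentation of $\pi$ with $r$ generators and $s$ relators, take $\#^r(S^1 \times S^3)$, and perform surgery along $s$ disjoint embedded circles representing the relators. The resulting closed orientable $4$-manifold $M^4$ has $\pi_1(M) = \pi$. By Corollary~\ref{12b}, $\cat M \geq 3$. For the reverse inequality $\cat M \leq 3$: since $M$ is visibly not aspherical (the surgery installs embedded $2$-spheres in $M$), the classifying map $M \to K(\pi,1)$ can be factored through a $3$-dimensional subcomplex, and a standard categorical-cover argument based on this factorization delivers $\cat M \leq 3$.

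For the general realization, the plan is first to build a base closed manifold $N$ of dimension $5 + l$ with $\pi_1(N) = \pi$, $\cat N = 3$, and a distinguished cohomology class $\alpha \in H^*(N; R)$ (in appropriate, possibly twisted, coefficients $R$) of category weight $\wgt(\alpha) = 3$, and then to set $M := N \times \C P^k$. To construct $N$, take a finite $2$-complex $K$ realizing $\pi_1(K) = \pi$, embed $K$ piecewise-linearly into $\R^{6 + l}$, and let $N$ be the boundary of a regular neighborhood of $K$. Because the codimension is $\geq 4$, the normal sphere fiber $S^{3 + l}$ is simply connected, so $\pi_1(N) \cong \pi_1(K) = \pi$ and a fiber-bundle style bound gives $\cat N \leq \cat K + 1 \leq 2 + 1 = 3$. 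The weight-$3$ class $\alpha$ comes, when $\pi$ is non-free, from pulling back a degree-$2$ class of $K(\pi,1)$ and cupping with an Euler/Thom-type class of the sphere fibration; when $\pi$ is free or trivial, a nontrivial triple Massey product must instead be installed in $N$ via a Wu-manifold-type building block.

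Setting $M = N \times \C P^k$ then yields $\dim M = (5 + l) + 2k = 5 + 2k + l$ and $\pi_1(M) = \pi$ since $\C P^k$ is simply connected. The product inequality gives $\cat M \leq \cat N + \cat \C P^k = 3 + k$. For the matching lower bound, let $u \in H^2(\C P^k; \Z)$ be the standard generator, so that $\wgt(u^k) = k$; by additivity of category weight on external cup products,
\[
\wgt(\alpha \times u^k) \;\geq\; \wgt(\alpha) + \wgt(u^k) \;=\; 3 + k,
\]
forcing $\cat M \geq 3 + k$, hence equality. The main obstacle is producing the base $N$ together with the weight-$3$ class $\alpha$ uniformly across all finitely presented $\pi$: the case of $\pi$ free or trivial is the most delicate, because Corollary~\ref{12b} provides no useful lower bound and the weight-$3$ input must come from a non-trivial higher cohomology operation that is preserved both by the thickening of $K$ and by the subsequent multiplication by $\C P^k$.
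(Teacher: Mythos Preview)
Your overall architecture (build a base manifold $N$ with $\cat N=3$ carrying a weight-$3$ detecting class, then multiply by a $2k$-dimensional factor) matches the paper's, and your use of $\C P^{k}$ in place of the paper's $(S^{2})^{k}$ is harmless. However, both of your constructions of the base manifold have genuine gaps in the upper bound $\cat\le 3$.

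\medskip
\textbf{The $4$-dimensional case.} Your surgery on $\#^{r}(S^{1}\times S^{3})$ does produce a closed $4$-manifold with $\pi_{1}(M)=\pi$, and $\cat M\ge 3$ follows from Corollary~\ref{12b}. But the step ``$M$ is not aspherical, hence the classifying map factors through a $3$-complex'' is false as stated: non-asphericity of $M$ does not by itself force the classifying map to compress into a lower skeleton (think of a non-aspherical $4$-manifold whose fundamental group has $\cd\ge 4$). What \emph{is} true for your construction is that the trace of the surgery gives a $\pi_{1}$-isomorphism $M\to K$ to the $2$-dimensional presentation complex; pulling back the Berstein--\v{S}varc class one gets $u_{M}^{4}=0$, and then the Berstein--\v{S}varc theorem (Theorem~\ref{t:berstein}) yields $\cat M<4$. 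The paper does exactly this, but starting instead from the boundary of a regular neighbourhood of $K\subset\R^{5}$, which makes the map $M\to K$ manifest.

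\medskip
\textbf{The $(5+l)$-dimensional base $N$.} Here the gap is more serious. Your $N$ is the boundary of a regular neighbourhood of a finite $2$-complex $K\subset\R^{6+l}$. Over the open $2$-cells of $K$ this boundary looks like an $S^{3+l}$-bundle, but at the $1$-skeleton and at the attaching loci the ``fibre'' degenerates; $\partial N\to K$ is \emph{not} a fibre bundle, so no product/fibre inequality of the form $\cat(\partial N)\le\cat K+1$ is available. All you get from the map $\partial N\to K$ is $u_{\partial N}^{\,5+l}=0$, hence $\cat(\partial N)\le 4+l$ by Berstein--\v{S}varc, which is far from $3$ once $l>0$. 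The paper's route is genuinely different and substantially harder: it performs surgery on $\#^{s}(S^{1}\times S^{n-1})$ and then proves $\cat\le 3$ by showing that the Ganea--\v{S}varc fibration $p_{3}^{M}$ admits a section, via a comparison of homotopy fibres along the surgery trace (Proposition~\ref{p:join} and Theorem~\ref{t:ganea}). That argument is the crux of Theorem~\ref{t:high} and is not replaceable by a neighbourhood-boundary trick.

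\medskip
\textbf{The free case.} Your appeal to ``a Wu-manifold-type building block'' and a triple Massey product is a placeholder, not an argument. The paper handles free $\pi$ directly: for $\pi\cong F_{s}$ it takes $N=\bigl(\#^{s}(S^{1}\times S^{2})\bigr)\times S^{\,n-3}$, which has cup-length $3$ and hence both $\cat N=3$ and an obvious detecting element. You should replace the Massey-product speculation by this explicit construction.
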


Thus, there is no restriction on the fundamental group of manifolds of
LS category 3 and higher.

The above results lead to the following questions:

\begin{question}
\label{4-problem}
If a 4-dimensional CW-complex ~$X$ has free fundamental group, then we
have the bound~$\cat X \leq 3$.  Is the stronger bound~$\cat X \leq 2$
necessarily satisfied?
\end{question}

We prove the inequality $\cat M \le n-2$ for connected~$n$-manifolds
with free fundamental group and~$n>4$, see \propref{p:n>4}.
In~\cite{S2}, J.~Strom proved a stronger inequality~$\cat X\le
\frac{2}{3}\dim X$ for an arbitrary CW-space $X$.  Later, it was
proved in~\cite{Dr} that if the fundamental group is free, then the
bound
\begin{equation}
\cat X\le \frac{1}{2}\dim X +1,
\end{equation}
is satisfied by every CW-complex~$X$.

The above Question~\ref{4-problem} has an affirmative answer when~$M$
is a closed orientable manifold, in view of a theorem due to
J.~A.~Hillman~\cite{Hi} which states that a closed~$4$-dimensional
manifold with free fundamental group has a CW-decomposition in which
the three-skeleton has the homotopy type of a wedge of spheres.

\begin{question} 
Is it true that~$\cat (M \setminus \{\pt\})=1$ for any closed
manifold~$M$ with~$\cat M=2$?  This is proved in~\cite{GoGo} for the
case~$\dim M=3$. A direct proof would imply the main theorem
trivially.
\end{question}

\begin{question}
Given integers~$m$ and~$n$, describe the fundamental groups of closed
manifolds~$M$ with~$\dim M=n$ and~$\cat M=m$.
\end{question}

Note that in the case~$m=n$, the fundamental group of~$M$ is of
cohomological dimension~$\ge n$, see e.g. the Berstein--\v Svarc
\theoref{t:berstein}.
Thus, we can ask when the converse holds.

\begin{question}
Given a finitely presented group~$\pi$ and an integer $n\geq 4$ such
that~$H^n(\pi)\not = 0$, when can one find a closed manifold~$M$
satisfying~$\pi_1(M)=\pi$ and $\dim M=\cat M=n?$ Note that
\propref{p:lense} shows that such a manifold~$M$ does not always
exist.
\end{question}

A related numerical invariant called the {\em systolic category\/} can
be thought of as a Riemannian analogue of the LS category~\cite{SGT}.
In \cite{DKR2} we apply \corref{12b} to prove that the systolic
category of a~$4$-manifold is a lower bound for its LS category.

\begin{theorem}
\label{12}
Every closed orientable~$4$-manifold~$M$ satisfies the
inequality~${\rm cat}_{\sys}(M) \leq \cat(M)$.
\end{theorem}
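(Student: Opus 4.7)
The plan is to proceed by case analysis on $\cat(M) \in \{1,2,3,4\}$; the extremes $\cat(M)=4$ and $\cat(M)=1$ are immediate, since in the former we have $\syscat(M)\le\dim M=4=\cat(M)$ trivially, while in the latter $M$ is homotopy equivalent to $S^4$, whence $\syscat(M)=1$.

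For $\cat(M)=3$, the plan is to rule out $\syscat(M)=4$. For a closed orientable $n$-manifold, $\syscat(M)=n$ is equivalent to $M$ being essential: the only length-$n$ partition $(1,\ldots,1)$ yields the inequality $\pisys_1^n \le C\cdot\vol$, which is precisely Gromov's inequality and holds if and only if $M$ is essential. On the other hand, by the Berstein--\v Svarc theorem, an essential closed orientable $n$-manifold satisfies $\cat(M)=n$. Thus $\cat(M)=3<4$ forces $M$ to be non-essential, whence $\syscat(M) \le 3$.

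For $\cat(M)=2$, \theoref{11} applies (since $M$, being $4$-dimensional, is not a surface) and yields that $\pi_1(M)$ is free. It remains to show $\syscat(M) \le 2$; by the preceding paragraph, it suffices to rule out $\syscat(M) = 3$. Since $\pi_1(M)$ is free, the classifying space $K(\pi_1(M),1)$ has the homotopy type of a wedge of circles, hence is $1$-dimensional. By Hillman's theorem, applicable since $M$ is orientable, the $3$-skeleton of $M$ has the homotopy type of a wedge of spheres. The strategy is then to construct, for each length-$3$ partition of $4$, a family of Riemannian metrics on $M$ refuting the corresponding systolic inequality, by independently rescaling the individual $1$- and $2$-spheres of the wedge and controlling the volume contribution of the attached top cell.

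The main obstacle is this last metric construction in the case $\cat(M)=2$. It requires promoting the homotopy decomposition of $M$ into a smooth Riemannian model in which the various sphere factors can be independently rescaled, and then tracking precisely how each candidate length-$3$ systolic product scales against the volume, in order to exhibit metrics on which the proposed inequality fails. The conceptual key to the whole argument is \corref{12b}, whose role is to restrict the fundamental group in the case $\cat(M)=2$ so that the one-dimensionality of $K(\pi_1(M),1)$ and Hillman's structure theorem together become applicable.
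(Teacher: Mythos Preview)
The paper does not actually contain a proof of \theoref{12}; immediately before the statement it says ``In \cite{DKR2} we apply \corref{12b} to prove that the systolic category of a~$4$-manifold is a lower bound for its LS category,'' and the argument is deferred entirely to that companion paper. So there is no proof here to compare against, only the hint that \corref{12b} (equivalently \theoref{11}) is the key input. Your outline is consistent with that hint: the cases $\cat(M)\in\{1,4\}$ are trivial, and for $\cat(M)=3$ your appeal to the equivalence ``$\syscat M=n\Leftrightarrow M$ essential'' together with ``$M$ essential $\Rightarrow\cat M=n$'' (the latter is exactly the Remark after \theoref{th:k-essent}) is correct and standard.

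The genuine gap is the case $\cat(M)=2$, and you identify it yourself. What you have written there is not a proof but a programme: invoke \theoref{11} to get $\pi_1(M)$ free, invoke Hillman to get the $3$-skeleton homotopy equivalent to a wedge of spheres, and then ``independently rescale the spheres'' to defeat every length-$3$ systolic inequality. The last step is where all the content lies, and it is not carried out. Two specific difficulties you have not addressed: first, the wedge-of-spheres description is only up to homotopy, and systoles are metric invariants, so you must produce honest Riemannian metrics on $M$ (not on a homotopy model) realising the desired scaling behaviour, including control of the top cell's contribution to volume; second, ruling out $\syscat(M)=3$ means defeating \emph{every} admissible length-$3$ partition, and the definition of systolic category allows stable systoles and systoles with local coefficients, so you must check that none of the relevant $2$-systoles (including stable ones) can be bounded in terms of $\pisys_1$ and volume. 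Until those constructions are made precise, the case $\cat(M)=2$ remains open in your write-up, and with it the theorem.
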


In particular, this inequality implies that if a 4-manifold $M$ has a
free fundamental group then ${\rm cat}_{\sys}(M) = \cat(M)$.  In a
related development in systolic topology, an intriguing model for
$BS^3$ built out of $BS^1$ was used in \cite{e7} to prove that the
symmetric metric of the quaternionic projective space, contrary to
expectation, is {\em not\/} its systolically optimal metric.

\m The proof of the main theorem proceeds roughly as follows. If the
group~$\pi:=\pi_1(M)$ is not free, then by a result of J.~Stallings
and R.~Swan, the group~$\pi$ is of cohomological dimension at
least~$2$.  We then show that~$\pi$ carries a suitable
nontrivial~$2$-dimensional cohomology class~$u$ with twisted
coefficients, and of category weight~$2$.  Viewing~$M$ as a subspace
of~$K(\pi,1)$ that contains the~$2$-skeleton~$K(\pi,1)^{(2)}$, and
keeping in mind the fact that the~$2$-skeleton carries the fundamental
group, we conclude that the restriction (pullback) of~$u$ to~$M$ is
non-zero and also has category weight~$2$.  By Poincar\'e duality with
twisted coefficients, one can find a complementary~$(n-2)$-dimensional
cohomology class.  By a category weight version of the cuplength
argument, we therefore obtain a lower bound of~$3$ for~$\cat M$.

In Section~\ref{local}, we review the material on local coefficient
systems, a twisted version of Poincar\'e duality, and 2-dimensional
cohomology of non-free groups.  In Section~\ref{three}, we review the
notion of category weight.  In Section~\ref{four}, we prove our main
result, Theorem~\ref{11}.  In Section~\ref{higher} we prove
\theoref{t:realization}.

\section{Cohomology with local coefficients}
\label{local}

A {\it local coefficient system}~$\A$ on a path
connected~CW-space~$X$ is a functor from the fundamental
groupoid~$\Gamma(X)$ of~$X$, to the category of abelian groups.  See
\cite{Ha}, \cite{Wh} for the definition and properties of local
coefficient systems.

In other words, an abelian group~$\A_x$ is assigned to each
point~$x\in X$, and for each path~$\ga$ joining~$x$ to~$y$, an
isomorphism~$\ga^*:\A_y \to \A_x$ is given.  Furthermore, paths that
are homotopic are required to yield the same isomorphism.

Let~$\pi=\pi_1(X)$, and let~$\zp$ be the group ring of~$\pi$.  Note
that all the groups~$A_x$ are isomorphic to a fixed group~$A$.  We
will refer to~$A$ as a {\it stalk\/} of~$\A$.

Given a map~$f: Y \to X$ and a local coefficient system~$\A$ on~$X$,
we define a local coefficient system on~$Y$, denoted~$f^*\A$, as
follows.  The map~$f$ yields a functor~$\Gamma(f): \Gamma(Y) \to
\Gamma(X)$, and we define~$f^*\A$ to be the functor~$\A\circ
\Gamma(f)$. Given a pair of coefficient systems~$\A$ and~$\B$, the
tensor product~$\A\otimes \B$ is defined by setting~$(\A\otimes
\B)_x=\A_x\otimes\B_x$.

\m \begin{ex}\label{ex:bundle} A useful example of a local coefficient
system is given by the following construction.  Given a fiber bundle
$p: E \to X$ over~$X$, set~$F_x=p^{-1}(x)$.  Then the family
$\{H_k(F_x)\}$ can be regarded a local coefficient system, see
\cite[Example 3, Ch. VI, \S 1]{Wh}.  An important special case is that
of an~$n$-manifold~$M$ and spherical tangent bundle~$p: E \to M$ with
fiber~$S^{n-1}$, yielding a local coefficient system~$\O$ with
$\O_x=H_{n-1}(S^{n-1}_x)\cong \Z$.  This local system is called the
{\it orientation sheaf\/} of~$M$.
\end{ex}

\begin{rem}
\label{22}
There is a bijection between local coefficients on~$X$ and
$\zp$-modules \cite[Ch. 1, Exercises F]{Sp}. If~$\A$ is a local
coefficient system with stalk~$A$, then the natural action of the
fundamental group on~$A$ turns~$A$ into a~$\Z[\pi]$-module.
Conversely, given a~$\Z[\pi]$-module~$A$, one can construct a local
coefficient system~$\scr L(A)$ such that induced~$\Z[\pi]$-module
structure on~$A$ coincides with the given one, cf.~\cite{Ha}.
\end{rem}

We recall the definition of the (co)homology groups with local
coefficients via modules \cite{Ha}:
\begin{equation}\label{cohomology}
H^k(X;\A)\cong H^k(\Hom_{\zp}(C_*(\wt X), A), \delta)
\end{equation}
and
\begin{equation}\label{homology}
H_k(X;\A)\cong H_k(A\otimes _{\zp}C_*(\wt X), 1\otimes \pa).
\end{equation}
Here~$(C_*(\wt X), \pa)$ is the chain complex of the universal
cover~$\wt X$ of~$X$,~$A$ is the stalk of the local coefficient
system~$\A$, and~$\delta$ is the coboundary operator.  Note that in
the tensor product we used the right~$\zp$ module structure on~$A$
defined via the standard rule~$ag=g^{-1}a$, for~$a\in A, g\in \pi$.

Recall that for~CW-complexes~$X$, there is a natural bijection
between equivalence classes of local coefficient systems and locally
constant sheaves on~$X$.  One can therefore define (co)homology with
local coefficients as the corresponding sheaf cohomology \cite{Br}.
In particular, we refer to \cite{Br} for the definition of the cup
product
\[
\cup: H^i(X;\A) \otimes H^j(X;\B) \to H^{i+j}(X; \A\otimes \B)
\]
and the cap product
\[
\cap: H_i(X;\A) \otimes H^j(X;\B) \to H_{i-j}(X; \A\otimes \B).
\]
A nice exposition of the cup  and the cap products in a slightly
different setting can be found in \cite{Bro}.  In particular, we have
the cap product
\[
H_k(X;\A)\otimes H^k(X;\B)\to H_0(X;\A\otimes \B)\cong
A\otimes_{\zp}B.
\]

\begin{prop}
\label{l:evaluation}
Given an integer~$k\ge 0$, there exists a local coefficient
system~$\B$ and a class~$v\in H^k(X;\B)$ such that, for every local
coefficient system~$\A$ and nonzero class~$a\in H_k(X;\A)$, we
have~$a\cap v\ne 0$.
\end{prop}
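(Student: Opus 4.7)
The plan is to construct $\B$ and $v$ explicitly from the chain complex of the universal cover $\wt X$, and then verify that the top-degree cap product with $v$ induces an injection $H_k(X;\A)\hookrightarrow A\otimes_{\zp}\ber$ for every local coefficient system $\A$, where $\ber$ will be the stalk of $\B$.

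Let $C_*(\wt X)$ denote the cellular chain complex of the universal cover, a chain complex of free left $\zp$-modules. Let $B_k:=\partial_{k+1}(C_{k+1}(\wt X))\subseteq C_k(\wt X)$ be the submodule of $k$-boundaries; this is a $\zp$-submodule because the differentials are $\zp$-equivariant. Set $\ber:=C_k(\wt X)/B_k$, viewed as a left $\zp$-module, and let $\B:=\scr L(\ber)$ be the associated local coefficient system on $X$ (see Remark~\ref{22}), so that the stalk of $\B$ is $\ber$. The natural projection $q\colon C_k(\wt X)\to \ber$ is $\zp$-linear and satisfies $q\circ\partial_{k+1}=0$ by construction; hence $q$ defines a cocycle in $\Hom_{\zp}(C_*(\wt X),\ber)$ in degree $k$, and I set $v:=[q]\in H^k(X;\B)$.

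The central claim is that the homomorphism $H_k(X;\A)\to A\otimes_{\zp}\ber$ sending $a\mapsto a\cap v$ is injective. Unpacking the top-degree cap product together with the identification $H_0(X;\A\otimes\B)\cong A\otimes_{\zp}\ber$, one finds that if $a$ is represented by a cycle $z=\sum a_i\otimes\sigma_i\in A\otimes_{\zp}C_k(\wt X)$, then $a\cap v$ is the image of $z$ under the map $\Psi\colon A\otimes_{\zp}C_k(\wt X)\to A\otimes_{\zp}\ber$ induced by $q$. Now the short exact sequence $0\to B_k\to C_k(\wt X)\to\ber\to 0$ of $\zp$-modules, together with the surjection $\partial_{k+1}\colon C_{k+1}(\wt X)\to B_k$, yields by right-exactness of $A\otimes_{\zp}(-)$ the exact sequence
\[
A\otimes_{\zp}C_{k+1}(\wt X)\;\xrightarrow{1\otimes\partial_{k+1}}\;A\otimes_{\zp}C_k(\wt X)\;\xrightarrow{\Psi}\;A\otimes_{\zp}\ber\to 0.
\]
Thus $\Ker(\Psi)$ is precisely the submodule of $k$-boundaries in the complex $A\otimes_{\zp}C_*(\wt X)$, and $\Psi$ restricted to $k$-cycles descends to an injection $H_k(X;\A)\hookrightarrow A\otimes_{\zp}\ber$. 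By construction this injection coincides with $a\mapsto a\cap v$, so $a\cap v\ne 0$ whenever $a\ne 0$.

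The main obstacle I foresee is the chain-level verification that the sheaf-theoretic cap product of \cite{Br} actually reduces, in this top-degree case, to the Kronecker-style evaluation formula $\sum a_i\otimes\sigma_i\mapsto \sum a_i\otimes q(\sigma_i)$. This amounts to a comparison of two standard descriptions of the cap product (sheaf-theoretic versus the chain-level formula via an Alexander--Whitney diagonal) and should follow from naturality, but it requires some careful bookkeeping between the left and right $\zp$-module structures on $A$ and $\ber$.
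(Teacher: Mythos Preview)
Your proposal is correct and follows essentially the same construction as the paper: the paper also takes $B=C_k(\wt X)/\Im\partial_{k+1}$, lets $v$ be the class of the quotient map, and uses right-exactness of $A\otimes_{\zp}(-)$ to conclude that a nonboundary maps to something nonzero. Your added caveat about matching the sheaf-theoretic cap product with the chain-level evaluation is a point the paper passes over in silence, so you are not missing anything the paper supplies.
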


\begin{proof}
Throughout the proof $\otimes$ denotes $\otimes_{\zp}$. We convert the
stalk of~$\A$ into a right~$\zp$-module~$A$ as above.  Below we use the
isomorphisms~\eqref{cohomology} and \eqref{homology}.  Consider the
chain~$\zp$-complex
\[
\CD \dots @>>> C_{k+1}(\wt X) @>\pa_{k+1}>>C_k(\wt X) @>\pa_k>>
C_{k-1}(\wt X) @>>> \dots .
\endCD
\]
For the given~$k$, we set~$B: =C_k(\wt X)/\Im \pa_{k+1}$.  Let~$\B$ be
the corresponding local system on~$X$.  Thus, we obtain the exact
sequence of~$\zp$-modules
\[
\CD C_{k+1}(\wt X) @>\pa_{k+1}>>C_k(\wt X)@>f>>B\to 0.  \endCD
\]
Note that the epimorphism~$f$ can be regarded as a~$k$-cocycle with
values in~$\B$, since~$\delta f (x)=f\pa_{k+1}(x)=0$.  Let~$v:=[f]\in
H^k(X;\B)$ be the cohomology class of~$f$.  Now we prove that
$$
a\cap [f] \not=0.
$$

Since the tensor product is right exact, we obtain the diagram
\[
\CD A\otimes C_{k+1}(\wt X)
@>1\otimes\pa_{k+1}>>A\otimes C_k(\wt X)@>1\otimes
f>>A\otimes B @>>> 0\\ @. @. g @ VVV @.\\
@. @. A\otimes C_{k-1}(\wt X) \endCD
\]
where the row is exact.  The composition
\[
\CD A\otimes C_k(\wt X)@>1\otimes f>>A\otimes B @>g>>
A\otimes C_{k-1}(\wt X)
\endCD
\]
coincides with~$1\otimes \pa_k$. We represent the class~$a$ by a cycle
\[
z\in A\otimes  C_k(\wt X).
\]
Since~$z\notin \Im(1\otimes \pa_{k+1})$, we conclude that
$$
(1\otimes f)(z)\ne 0\in A\otimes B=H_0(X;\A\otimes \B).
$$
Thus, for the cohomology class~$v$ of~$f$ we have~$a\cap v \ne 0$.
\end{proof}

Every closed connected~$n$-manifold~$M$ satisfies~$H_n(M; \O)\cong
\Z$.  A generator (one of two) of this group is called the {\it
fundamental class} of~$M$ and is denoted by~$[M]$.

\m One has the following generalization of the Poincar\'e duality
isomorphism.

\begin{thm}[{\cite[Corollary 10.2]{Br}}]
\label{th:PD}
The homomorphism
\begin{equation}\label{eq:PD}
\Delta: H^i(M;\A)\to H_{n-i}(M;\O\otimes \A)
\end{equation}
defined by setting~$\Delta(a)=[M]\cap a$, is an isomorphism.
\end{thm}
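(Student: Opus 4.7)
The plan is to establish the twisted Poincar\'e duality isomorphism by reducing to the standard (untwisted) duality on Euclidean balls and patching via Mayer--Vietoris, in the spirit of Milnor--Stasheff or Bredon. First, I would extend the statement from the compact manifold $M$ to an arbitrary open subset $U \subset M$, by introducing cohomology with compact supports so that the cap product defines a homomorphism
\[
\Delta_U \colon H^i_c(U;\A|_U) \to H_{n-i}(U; \O|_U \otimes \A|_U),
\]
obtained by capping with a locally defined fundamental class. When $U = M$, compact supports coincide with ordinary ones and $\Delta_U = \Delta$, so it suffices to show $\Delta_U$ is an isomorphism for every open $U$ (or at least enough of them to cover $M$).

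Next, I would verify the base case $U \cong \R^n$. On such a contractible chart both $\O|_U$ and $\A|_U$ are constant local systems with stalks $\Z$ and $A$ respectively, so $\Delta_U$ reduces to classical compactly supported Poincar\'e duality with constant coefficients on $\R^n$, which is a direct computation (the only nonzero groups are $H^n_c(\R^n;A)\cong A$ in degree $n$ mapping to $H_0(\R^n;A)\cong A$ in degree $0$). I would then develop Mayer--Vietoris sequences for compactly supported twisted cohomology and for twisted homology, and verify that $\Delta$ commutes (up to sign) with the connecting homomorphisms; this compatibility step is the technical heart of the argument and, in my view, the main obstacle, since one has to keep careful track of how $\O$ and $\A$ restrict under inclusions and how the fundamental classes of $U$, $V$, $U\cap V$, and $U\cup V$ are related.

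With these ingredients in place, I would conclude by induction on the number of charts in a finite good cover of $M$ (which exists by compactness, e.g.\ via a smooth triangulation or a convex cover in an auxiliary Riemannian metric): if $\Delta_U$, $\Delta_V$, and $\Delta_{U\cap V}$ are isomorphisms, the five lemma applied to the Mayer--Vietoris ladder yields that $\Delta_{U\cup V}$ is also an isomorphism. After finitely many steps one arrives at $U = M$, giving \theoref{th:PD}. A more algebraic alternative, closer in spirit to the $\zp$-module formalism already used in \propref{l:evaluation}, would be to pass to the universal cover $\wt M$ and exhibit cap product with a $\zp$-chain representing $[M]$ as a chain homotopy equivalence between $\Hom_{\zp}(C_*(\wt M), A)$ and $(\O\otimes A)\otimes_{\zp} C_{n-*}(\wt M)$; however, the Mayer--Vietoris route seems cleanest for organizing the local-to-global passage and avoids committing to a specific chain-level duality.
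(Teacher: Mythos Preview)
The paper does not actually prove \theoref{th:PD}; it merely quotes it from Bredon~\cite[Corollary~10.2]{Br}, adding only the remark that $\O^{-1}=\O$ for manifolds. Your Mayer--Vietoris/good-cover outline is a correct sketch of the standard argument (indeed, essentially the one found in Bredon), so there is nothing substantive to compare.
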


In fact, in \cite{Br} there is the sheaf~$\O^{-1}$ at the right, but
for manifolds we have~$\O=\O^{-1}$.

Given a group~$\pi$ and a~$\Z[\pi]$-module~$A$, we denote by
$H^*(\pi;A)$ the cohomology of the group~$\pi$ with coefficients
in~$A$, see e.g.~\cite{Bro}.  Recall that~$H^i(\pi;A)=
H^i(K(\pi,1);\scr L(A))$, see Remark~\ref{22}.

\m Let~$\cd(\pi)$ denote the cohomological dimension of~$\pi$
over~$\Z$, i.e. the largest~$m$ such that there exists
an~$\Z[\pi]$-module~$A$ with~$H^m(\pi; A)\ne 0$.

\begin{thm}[\cite{Stal, Swan}]\label{th:free}
If~$\cd\pi\le 1$ then~$\pi$ is a free group.
\end{thm}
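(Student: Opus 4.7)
The plan is to follow the classical end-theoretic strategy of Stallings (finitely generated case) and Swan (general case). The idea is that the hypothesis $\cd \pi \le 1$ forces $\pi$ to have enough structure to act nontrivially on a tree with trivial edge stabilizers, whence $\pi$ is free by Bass--Serre theory.

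First I would show that $\cd \pi \le 1$ forces $\pi$ to be torsion-free: any nontrivial finite subgroup $F \le \pi$ satisfies $\cd_{\Z} F = \infty$ (one has $H^{2k}(\Z/n;\Z) \ne 0$ for all $k \ge 0$), while subgroups never increase cohomological dimension. Next, following Swan, I would reduce to the finitely generated case. A group is free provided every one of its finitely generated subgroups is free (by Nielsen--Schreier together with a direct-limit argument on presentations), and cohomological dimension does not increase on passage to subgroups, so it suffices to treat the finitely generated case.

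Now assume $\pi$ is finitely generated, torsion-free, and nontrivial with $\cd \pi \le 1$, which means the augmentation ideal $I\pi = \ker(\Z[\pi] \to \Z)$ is projective over $\Z[\pi]$. The key step is to show $H^1(\pi; \Z[\pi]) \ne 0$, so that $\pi$ has more than one end. If $\pi$ has exactly two ends then $\pi$ is virtually $\Z$, hence $\pi \cong \Z$ by torsion-freeness. Otherwise $\pi$ has infinitely many ends, and by Stallings' structure theorem it splits nontrivially as an amalgamated product or HNN extension over a finite (hence trivial) subgroup, giving $\pi \cong A * B$ with both factors proper, or $\pi \cong A * \Z$. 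The factors inherit $\cd \le 1$ and have strictly smaller rank (by Grushko's theorem), so induction on the rank completes the argument.

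The main obstacle is the key step in the preceding paragraph: showing that a finitely generated nontrivial group of cohomological dimension at most $1$ has more than one end. This requires combining the projectivity of $I\pi$ with a careful analysis of $H^1(\pi;\Z[\pi])$ to produce an almost invariant subset of $\pi$---equivalently, a $1$-cocycle with coefficients in $\Z[\pi]$ that is not a coboundary. This is the deepest ingredient of the Stallings--Swan theorem; the rest of the argument (reduction to finite generation and Bass--Serre descent) is comparatively formal.
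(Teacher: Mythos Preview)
The paper does not give its own proof of this statement: \theoref{th:free} is simply quoted from the literature with the citations \cite{Stal,Swan}, and the authors use it as a black box in the proof of \theoref{th:main}. So there is no ``paper's proof'' to compare against; your proposal is an attempt to sketch the original Stallings--Swan argument.

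Your outline of the finitely generated case is essentially correct and faithful to Stallings' approach: torsion-freeness, the end-theoretic input $H^1(\pi;\Z[\pi])\ne 0$, Stallings' structure theorem, and induction via Grushko. You also correctly flag the end-theoretic step as the deep ingredient.

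There is, however, a genuine gap in your reduction to the finitely generated case. You write that ``a group is free provided every one of its finitely generated subgroups is free (by Nielsen--Schreier together with a direct-limit argument on presentations)''. This is false as stated: $\Q$ is locally free (every finitely generated subgroup is infinite cyclic) but $\Q$ is not free. Nielsen--Schreier goes the wrong way for this purpose, and a naive direct-limit argument cannot work without further input. Swan's actual contribution is not a general ``locally free $\Rightarrow$ free'' principle; rather, he uses the hypothesis $\cd\pi\le 1$ again at this stage. One version of his argument runs as follows: $\cd\pi\le 1$ makes the augmentation ideal $I(\pi)$ a projective $\Z[\pi]$-module; Stallings' theorem shows $\pi$ is locally free; Swan then proves that over the group ring of a locally free group every projective module is free, whence $I(\pi)$ is a free $\Z[\pi]$-module, and from a free basis of $I(\pi)$ one extracts a free basis of $\pi$. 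So the passage from finitely generated to arbitrary groups is substantially more delicate than your parenthetical suggests, and it genuinely re-uses the cohomological-dimension hypothesis rather than being ``comparatively formal''.
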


We will need the following known fact from the cohomology theory of
groups.

\begin{lemma}\label{l:cd}
If~$\pi$ be a group with~$\cd\pi=q\geq 2$.  Then~$H^2(\pi;A)\ne
0$ for some~$\zp$-module~$A$.
\end{lemma}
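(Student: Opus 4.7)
My plan is to argue by contradiction. Suppose that $H^2(\pi;A)=0$ for every $\Z[\pi]$-module $A$. I will deduce that $\cd \pi \le 1$, contradicting the hypothesis $q\ge 2$ (this is how we will in fact land inside the hypothesis of \theoref{th:free}, but here we only want the contradiction).

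The engine is a one-step dimension shift via a projective resolution. Choose any projective resolution $\cdots \to P_1\to P_0\to \Z\to 0$ of the trivial $\Z[\pi]$-module, and set $K=\ker(P_0\to \Z)$, so that
\[
0\to K\to P_0\to \Z\to 0
\]
is short exact. Applying $\operatorname{Ext}^*_{\Z[\pi]}(-,A)$ and using $\operatorname{Ext}^n(P_0,A)=0$ for $n\ge 1$ (because $P_0$ is projective), the connecting homomorphism yields an isomorphism
\[
\operatorname{Ext}^1_{\Z[\pi]}(K,A)\;\cong\;\operatorname{Ext}^2_{\Z[\pi]}(\Z,A)\;=\;H^2(\pi;A)
\]
valid for every $A$. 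Under the standing assumption the right-hand side vanishes for all $A$, so $\operatorname{Ext}^1_{\Z[\pi]}(K,A)=0$ for all $A$. By the standard characterization of projectivity, $K$ is then a projective $\Z[\pi]$-module. Hence $0\to K\to P_0\to \Z\to 0$ is itself a projective resolution of $\Z$ of length at most one, so $\cd \pi\le 1$, contradicting $q\ge 2$.

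This is pure homological algebra, so I do not anticipate a substantive obstacle; the one item to double-check is the dimension-shift isomorphism, which is immediate from the long exact $\operatorname{Ext}$ sequence together with projectivity of $P_0$. As an alternative route one could dimension-shift directly on the given nonzero class: iteratively embed the module $B$ (with $H^q(\pi;B)\neq 0$) into an injective $\Z[\pi]$-module $I$ and replace $B$ by $I/B$; each step drops the degree of the nonzero cohomology class by one, so after $q-2$ steps one arrives at a module $A$ with $H^2(\pi;A)\neq 0$. Both versions rely only on standard facts about $\operatorname{Ext}$ and injective/projective modules.
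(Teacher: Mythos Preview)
Your argument is correct. Interestingly, the paper takes precisely the route you sketch as your ``alternative'': it picks a module $B$ with $H^q(\pi;B)\ne 0$, embeds $B$ in an injective $\Z[\pi]$-module $J$, and uses the long exact sequence of $0\to B\to J\to J/B\to 0$ together with $H^*(\pi;J)=0$ to obtain $H^{q-1}(\pi;J/B)\cong H^q(\pi;B)\ne 0$; iterating $q-2$ times produces the desired module $A$ with $H^2(\pi;A)\ne 0$. Your primary argument is the dual dimension shift, carried out on the projective side of the trivial module $\Z$ rather than on the injective side of the coefficients: from the hypothetical vanishing of $H^2(\pi;-)$ you force the first syzygy $K$ of $\Z$ to be projective, whence $\cd\pi\le 1$. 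The projective version has the minor advantage of yielding the slightly sharper conclusion $\cd\pi\le 1$ directly (rather than just exhibiting a single nonzero $H^2$), while the paper's injective version is constructive in that it actually names the module $A$ in terms of the given $B$. Both are standard one-line exercises in $\operatorname{Ext}$, and there is no gap in either.
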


\begin{proof} 
We use the fact that cohomology of the group $\pi$ with coefficients
in an injective $\zp$-module are trivial and the fact that every
$\zp$-module $A'$ can be imbedded into an injective $\zp$module
$J$,~\cite{Bro}.  Let~$0\to A'\to J\to A''\to 0$ be an exact sequence
of~$\zp$-modules with~$J$ injective. Then by the coefficients long
exact sequence~$H^k(\pi;A')=H^{k-1}(\pi;A'')$ for $k>1$.
Since~$H^q(\pi;B)\ne 0$ for some~$B$, the proof can be completed by an
obvious induction.
\end{proof}

\section{Category weight and lower bounds for~$\cat$}
\label{three}

In this section, we review the notion of category weight and its
relation to the Lusternik-Schnirelmann category.

\begin{definition}[\cite{BG,Fe,F}]
\label{def:cat-map}
Let~$f: X \to Y$ be a map of (locally contractible)~CW-spaces. The
\emph{Lusternik--Schnirelmann category of~$f$}, denoted~$\cat(f)$, is
defined to be the minimal integer~$k$ such that there exists an open
covering~$\{U_0, \ldots, U_k\}$ of~$X$ with the property that each of
the restrictions~$f|A_i\colon A_i \to Y$,~$i=0,1, \ldots, k$ is
null-homotopic.

The \emph{Lusternik--Schnirelmann category~$\cat X$ of a space~$X$} is
defined as the category~$\cat (1_X)$ of the identity map.
\end{definition}

\begin{definition}\label{def:swgt}
The \emph{category weight}~$\wgt(u)$ of a non-zero cohomology class~$u \in
H^*(X; \A)$ is defined as follows:
\begin{equation*}
\label{31}
\wgt(u)\ge k \Longleftrightarrow \{\gf^*(u)=0 {\rm\ for\ every\ }
\gf\colon F \to X {\rm\ with\ } \cat(\gf) < k\}.
\end{equation*}
\end{definition}

\begin{rem}\label{rem:credits}\rm
E.~Fadell and S.~Husseini (see \cite{FH}) originally proposed the
notion of category weight.  In fact, they considered an invariant
similar to the~$\wgt$ of \eqref{31} (denoted in \cite{FH} by~$\cwgt$),
but where the defining maps~$\gf\colon F \to X$ were required to be
inclusions rather than general maps.  As a consequence,~$\cwgt$ is not
a homotopy invariant, and thus a delicate quantity in homotopy
calculations.  Yu.~Rudyak \cite{R1, R2} and J.~Strom \cite{S1}
proposed a homotopy invariant version of category weight as defined in
\defref{def:swgt}.
\end{rem}

\begin{prop}[\cite{R1,S1}]
\label{prop:swgtprops}
Category weight has the following properties.
\begin{enumerate}
\item~$1\le \wgt(u) \leq \cat(X)$, for all~$u \in \widetilde
H^*(X;\A), u\ne 0$.
\vskip3pt
\item For every~$f\colon Y \to X$ and~$u\in H^*(X;\A)$ with
$f^*(u)\not = 0$ we have
$\cat(f) \geq \wgt(u)$ and~$\wgt(f^*(u)) \geq \wgt(u)$.
\vskip3pt
\item For~$u\in H^*(X;\A)$ and~$v\in H^*(X;\B)$ we have
\[
\wgt(u\cup v) \geq \wgt(u) + \wgt(v).
\]  \vskip3pt
\item For every~$u \in H^s(K(\pi,1);\A)$,~$u\ne 0$, we have
$\wgt(u)\geq s$.  \vskip3pt
\end{enumerate}
\end{prop}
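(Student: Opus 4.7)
My plan is to prove the four properties in order, treating (1)--(3) as essentially formal consequences of the definition and reserving the substantive argument for (4).

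For (1), the upper bound $\wgt(u) \leq \cat(X)$ is forced by applying the definition to $\gf = 1_X$: otherwise $u = 1_X^*(u)$ would have to vanish. The lower bound $\wgt(u) \geq 1$ for $u \in \widetilde H^*(X;\A)$ is immediate because any $\gf$ with $\cat(\gf) = 0$ is null-homotopic and so kills reduced cohomology. For (2), the bound $\cat(f) \geq \wgt(u)$ is the definition read with $\gf = f$. For $\wgt(f^*u) \geq \wgt(u)$, observe that any categorical cover of $G$ witnessing $\cat(\psi) \leq r$ for $\psi\colon G \to Y$ is automatically a categorical cover witnessing $\cat(f \circ \psi) \leq r$; hence whenever $\cat(\psi) < \wgt(u)$ we get $\psi^*(f^*u) = (f\psi)^*u = 0$.

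Part (3) is a standard relative cup product argument. Set $k = \wgt(u)$ and $\ell = \wgt(v)$; the interesting case is $k, \ell \geq 1$. Let $\gf\colon F \to X$ satisfy $\cat(\gf) = m < k + \ell$ and pick a categorical cover $U_0, \ldots, U_m$ of $F$. Split it as $V = U_0 \cup \cdots \cup U_{k-1}$ and $W = U_k \cup \cdots \cup U_m$; this is possible because $m+1 \leq k+\ell$. Then $\cat(\gf|_V) \leq k-1$ and $\cat(\gf|_W) \leq m-k \leq \ell-1$, so the definition of $\wgt$ gives $(\gf^*u)|_V = 0$ and $(\gf^*v)|_W = 0$. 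Thus $\gf^*u$ lifts to a relative class in $H^k(F, V; \gf^*\A)$ and $\gf^*v$ lifts to one in $H^\ell(F, W; \gf^*\B)$, and their relative cup product lies in $H^{k+\ell}(F, V \cup W; \gf^*(\A \otimes \B)) = H^{k+\ell}(F, F; \gf^*(\A \otimes \B)) = 0$. Restricting back to absolute cohomology yields $\gf^*(u \cup v) = 0$.

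Part (4) is the substantive step. Given $\gf\colon F \to K$ with $K = K(\pi, 1)$ and $\cat(\gf) = m < s$, I need $\gf^* u = 0$. The main route I would take goes through the Svarc--Ganea characterization of category: $\cat(\gf) \leq m$ is equivalent to the fiberwise $(m+1)$-fold join of the pulled-back path fibration $\gf^*PK \to F$ admitting a section. Since $\Omega K \simeq \pi$ is discrete, the fiber $\pi^{*(m+1)}$ of this join fibration is $(m-1)$-connected, and an obstruction argument exploiting the asphericity of $K$ then forces $u$ to pull back to zero in dimensions $s > m$. A more streamlined alternative uses the Berstein--\v{S}varc class $\ber \in H^1(K; I(\pi))$: being reduced, $\ber$ satisfies $\wgt(\ber) \geq 1$ by (1), so (3) yields $\wgt(\ber^s) \geq s$, and a universality property of $\ber$ (any class in $H^s(K; \A)$ is obtained from $\ber^s$ via an appropriate coefficient pairing) then transfers this weight bound to an arbitrary $u$. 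I expect the main obstacle to be making this coefficient-pairing step precise with local coefficients, which is where the real technical work lies.
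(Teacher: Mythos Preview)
The paper does not actually prove this proposition: its entire proof is a citation of \cite[\S 2.7 and Proposition 8.22]{CLOT} together with the remark that the arguments there carry over to local coefficients. Your treatments of (1)--(3) are correct and are precisely the standard arguments one finds in that reference, so on those items you have supplied more than the paper does.

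For (4) both routes you outline are valid, but neither is carried to completion, and the phrase ``an obstruction argument exploiting the asphericity of $K$'' is a placeholder rather than a proof. The Ganea--\v{S}varc route is finished as follows: $\cat(\gf)\le m$ means $\gf$ factors up to homotopy through $p_m^K:G_m(K)\to K$; since $K=K(\pi,1)$ has contractible universal cover, pulling the fibration $p_m^K$ back along $\wt K\to K$ shows that the universal cover of $G_m(K)$ is homotopy equivalent to the fiber $\pi^{*(m+1)}$, which is an $m$-dimensional simplicial complex. By formula~\eqref{cohomology} this forces $H^s(G_m(K);\mathcal C)=0$ for every local system $\mathcal C$ and every $s>m$, whence $\gf^*u=0$. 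Your Berstein--\v{S}varc alternative also works once one knows the universality statement you allude to, namely that every $u\in H^s(\pi;A)$ is the image of $\ber^s\in H^s(\pi;I(\pi)^{\otimes s})$ under the map induced by some $\zp$-morphism $I(\pi)^{\otimes s}\to A$; this is proved for instance in \cite{DR,Sv}, and combined with your items (1) and (3) it gives $\wgt(u)\ge\wgt(\ber^s)\ge s$. Either completion is short, so the gap is minor.
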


\begin{proof}
See \cite[\S 2.7 and Proposition 8.22]{CLOT}, the proofs in
loc. cit. can be easily adapted to local coefficient systems.
\end{proof}

\section{Manifolds of LS category~$2$}
\label{four}

In this section we prove that the fundamental group of a closed
connected manifold of LS category~$2$ is free.

\begin{thm}
\label{th:main}
Let~$M$ be a closed connected manifold of dimension at least~$3$.  If
the group~$\pi:=\pi_1(M)$ is not free, then~$\cat M \geq 3$.
\end{thm}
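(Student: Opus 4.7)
The plan is to execute the strategy sketched in the introduction: produce a $2$-dimensional cohomology class on $M$ of category weight at least $2$, pair it via Poincaré duality with a complementary class of positive category weight, and then invoke the cuplength lower bound (Proposition~\ref{prop:swgtprops}) to conclude $\cat M \geq 3$.

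First I would set $\pi = \pi_1(M)$. Since $\pi$ is not free, Theorem~\ref{th:free} forces $\cd \pi \geq 2$, and then Lemma~\ref{l:cd} supplies a $\zp$-module $A$ with $H^2(\pi;A)\ne 0$. Fix a nonzero class $u \in H^2(K(\pi,1);\scr L(A))$. By Proposition~\ref{prop:swgtprops}(4) we have $\wgt(u)\geq 2$. Next, I would realize the classifying map as an inclusion $\mu\colon M \hookrightarrow K(\pi,1)$, where $K(\pi,1)$ is built from $M$ by attaching cells of dimension~$\geq 3$ to kill the higher homotopy groups. Running the long exact sequence of such a pair $(M',M)$ cell by cell shows that attaching a $k$-cell with $k\geq 3$ yields an injection $H^2(M';\A) \hookrightarrow H^2(M;\A)$ for any local system $\A$; passing to the colimit, $\mu^*\colon H^2(K(\pi,1);\scr L(A)) \to H^2(M;\scr L(A))$ is injective. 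Hence $\mu^*(u)\neq 0$, and by Proposition~\ref{prop:swgtprops}(2) we still have $\wgt(\mu^*(u))\geq \wgt(u)\geq 2$.

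Now I would apply twisted Poincaré duality (Theorem~\ref{th:PD}) to conclude that the homology class
\[
[M]\cap \mu^*(u) \in H_{n-2}(M;\O\otimes \scr L(A))
\]
is nonzero; note that $n-2\geq 1$ thanks to the hypothesis $n\geq 3$. Proposition~\ref{l:evaluation}, applied to this nonzero homology class in degree $n-2$, produces a local coefficient system $\B$ and a class $v\in H^{n-2}(M;\B)$ with
\[
\bigl([M]\cap \mu^*(u)\bigr)\cap v \neq 0.
\]
By the associativity of cap and cup products, this equals $[M]\cap\bigl(\mu^*(u)\cup v\bigr)$, so $\mu^*(u)\cup v \neq 0$ in $H^n(M;\scr L(A)\otimes \B)$; in particular $v\ne 0$, and thus $\wgt(v)\geq 1$ by Proposition~\ref{prop:swgtprops}(1). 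Combining the multiplicativity of category weight with the estimates already obtained:
\[
\cat M \;\geq\; \wgt\bigl(\mu^*(u)\cup v\bigr) \;\geq\; \wgt(\mu^*(u)) + \wgt(v) \;\geq\; 2+1 \;=\; 3.
\]

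The main obstacle, I expect, will be the step $\mu^*(u)\neq 0$: one must check that the chosen twisted $2$-class on $B\pi$ really survives the restriction to $M$. The argument above handles this by exploiting that $M$ may be taken as a subcomplex of $K(\pi,1)$ whose complement is built from cells of dimension $\geq 3$, so that $H^2$ can only be cut down by coboundaries from $H^1$, not by new cells in degree $2$; this is the only place where the manifold structure and the dimension hypothesis interact. Once $\mu^*(u)\neq 0$ is secured, everything else is a mechanical combination of twisted Poincaré duality (Theorem~\ref{th:PD}), the universal pairing (Proposition~\ref{l:evaluation}), and the standard properties of category weight (Proposition~\ref{prop:swgtprops}).
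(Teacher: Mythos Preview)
Your proof is correct and follows essentially the same route as the paper's: the only cosmetic difference is that you verify $\mu^*(u)\neq 0$ by building $K(\pi,1)$ from $M$ via cells of dimension $\geq 3$ and using the long exact sequence of the pair, whereas the paper factors through the $2$-skeleton $i\colon K\hookrightarrow M$ and observes that $(fi)^*$ is injective on $H^2$---both arguments exploit that $M$ carries the $2$-skeleton of $K(\pi,1)$. (One small quibble with your closing commentary: the manifold hypothesis is also used for Poincar\'e duality, and $n\geq 3$ is really needed so that $v$ lives in positive degree and hence has $\wgt(v)\geq 1$, not for the injectivity of $\mu^*$.)
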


\begin{proof}
By \theoref{th:free} and \lemref{l:cd}, there a local coefficient
system~$\A$ on~$K(\pi,1)$ such that~$H^2(K(\pi,1);\A)\ne 0$.  Choose a
non-zero element~$u\in H^2(K(\pi,1);\A)$. Let~$f: M \to K(\pi,1)$ be
the map that induces an isomorphism of fundamental groups, and let~$i:
K\to M$ be the inclusion of the~$2$-skeleton. (If~$M$ is not
triangulable, we take~$i$ to be any map of a~$2$-polyhedron that
induces an isomorphism of fundamental groups.)  Then
\[
(fi)^*: H^2(K(\pi,1);\A) \to H^2(K;(fi)^*\A)
\]
is a monomorphism.  In particular, we have~$f^*u\not=0$
in~$H^2(M;(f)^*\A)$.  Now consider the class
\[
a=[M]\cap f^*u\in H_{n-2}(M;\O^{-1}\otimes f^*\A),
\]
where $n=\dim M$.  Then~$a\ne 0$ by \theoref{th:PD}.  Hence, by
Proposition~\ref{l:evaluation}, there exists a class~$v\in
H^{n-2}(M;\B)$ such that~$a\cap v \ne 0$. We claim that~$f^*u\cup v
\ne 0$. Indeed, one has
\[
[M]\cap (f^*u\cup v)=([M]\cap f^*u)\cap v =a\cap v\ne 0.
\]
Now,~$\wgt f^*u\ge 2$ by \propref{prop:swgtprops}, items (2)
\and~(4). Furthermore,~$\wgt(v)\geq 1$ by \propref{prop:swgtprops},
item~(1).  We therefore obtain the lower bound~$\wgt(f^*u\cup v)\ge 3$
by \propref{prop:swgtprops}, item (3).  Since~$f^*u\cup v\ne 0$, we
conclude that~$\cat M \ge 3$ by \propref{prop:swgtprops}, item (1).
\end{proof}

\begin{cor}
If~$M^n, n\geq 3$ is a closed manifold with~$\cat M\le 2$, then
$\pi_1(M)$ is a free group.
\end{cor}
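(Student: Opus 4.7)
The plan is essentially to read the corollary as the contrapositive of Theorem~\ref{th:main}. First, I would address the tacit connectedness hypothesis hidden in the notation $\pi_1(M)$: for this normalization of the LS category one has $\cat(X\sqcup Y)=\cat X+\cat Y+1$, while each component of $M$ is itself a closed manifold of dimension $n\geq 3$, hence non-contractible and of positive category. Together with $\cat M\leq 2$ this forces $M$ to be connected (and in fact a single component).

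With $M$ connected, $n\geq 3$, and $\cat M\leq 2$ in hand, I would argue by contradiction. Suppose, toward a contradiction, that $\pi_1(M)$ is not free. Then every hypothesis of Theorem~\ref{th:main} is satisfied, and that theorem yields $\cat M\geq 3$, contradicting $\cat M\leq 2$. Therefore $\pi_1(M)$ must be a free group, as claimed.

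There is no genuine obstacle in this argument, since all of the substantive work has already been carried out inside the proof of Theorem~\ref{th:main}: the Stallings--Swan characterization of free groups by cohomological dimension (Theorem~\ref{th:free}), the production via Lemma~\ref{l:cd} of a nontrivial twisted $2$-dimensional cohomology class on $K(\pi,1)$, the pullback of this class to $M$ along a map inducing an isomorphism on $\pi_1$ while controlling non-vanishing via the $2$-skeleton, the construction of a complementary $(n-2)$-dimensional class via twisted Poincar\'e duality (Theorem~\ref{th:PD}) and Proposition~\ref{l:evaluation}, and the category-weight cup-length estimate afforded by Proposition~\ref{prop:swgtprops}. The corollary is simply the contrapositive reformulation of Theorem~\ref{th:main} that is most convenient to cite as the main theorem of the paper (cf.\ Theorem~\ref{11}).
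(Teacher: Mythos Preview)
Your proof is correct and matches the paper's approach exactly: the paper does not even supply a separate proof of this corollary, since it is the immediate contrapositive of Theorem~\ref{th:main}. Your additional remark reducing to the connected case via $\cat(X\sqcup Y)=\cat X+\cat Y+1$ is a harmless elaboration of a hypothesis the paper leaves implicit in writing $\pi_1(M)$.
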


\begin{rem}
An alternative approach to Theorem~\ref{th:main} would be using the
Berstein-\v{S}varc class $\ber\in H^1(\pi;I(\pi))$ where $I(\pi)$ is
the augmentation ideal of $\pi$. If~$ \cd(\pi)\ge 2$ then $\ber^{2}
\ne 0$ by \cite{DR} (see also \theoref{t:berstein}).  In
particular,~$H^2(\pi;I(\pi)\otimes I(\pi))\ne 0$, and we obtain an
alternative proof of \lemref{l:cd}.
\end{rem}

The following Proposition is a special case of ~\cite[Corollary
4.2]{Dr}.  Here we give a relatively simple geometric proof.

\begin{prop}\label{p:n>4}
Let $M$ be a closed connected $n$-dimensional PL manifold, $n>4$, with
free fundamental group.  Then $\cat M \le n-2$.
\end{prop}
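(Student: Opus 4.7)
The goal is to exhibit an open cover of $M$ by $n-1$ open sets, each contractible in $M$; by the definition of LS-category this gives $\cat M \le n-2$.

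First, I would exploit the hypothesis that $\pi_1(M)$ is free: the attaching map of every $2$-cell in a CW decomposition of $M$ represents a trivial element of the free group $\pi_1(M)$, hence is null-homotopic, and so may be homotoped to the basepoint. Thus, up to homotopy equivalence, $M$ admits a CW structure whose $2$-skeleton has the form
\[
Y \;=\; \bigvee\nolimits_{r} S^1 \;\vee\; \bigvee\nolimits_{s} S^2.
\]
This wedge has $\cat Y = 1$, with a standard cover by two open subsets (a star-shaped neighborhood of the wedge point and the union of the complementary caps/arcs). Via PL regular neighborhoods in a triangulation compatible with the chosen CW structure, these extend to open sets $U_1, U_2 \subset M$ which are still contractible in $M$ (each is contractible as a space, so its inclusion into $M$ is automatically null-homotopic). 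For each $k \in \{3,\ldots,n\}$ I then take a single open set $V_k \subset M$ equal to a disjoint union of small pairwise-disjoint open PL-thickenings, one per open $k$-cell; each component is contractible, and the disjoint union is contractible in $M$ because $M$ is path-connected (the separate null-homotopies can be rerouted to a common point). The family $U_1, U_2, V_3, \ldots, V_n$ gives an open cover of $M$ by $n$ sets, yielding the preliminary bound $\cat M \le n-1$.

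To improve the count from $n$ to $n-1$ I would choose the CW structure so that $M$ has a single top cell $e^n$, and absorb $V_n$ (a single open ball) into $V_{n-1}$. The PL attaching map $\phi\colon S^{n-1}\to X^{(n-1)}$ pulls the cover of $X^{(n-1)}$ by the components of $V_{n-1}$ back to an open cover of $S^{n-1}$; using the radial parametrization $e^n\setminus\{0\}\cong S^{n-1}\times(0,1]$, I extend each component of $V_{n-1}$ inward, and then push one chosen extended component all the way to the centre $0\in e^n$, absorbing the central open ball. The extended $V_{n-1}$ remains a disjoint union of contractible pieces, hence contractible in $M$, and $V_n$ is no longer needed. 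The final cover consists of $n-1$ open sets contractible in $M$.

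The main obstacle I anticipate is precisely this last absorption step: one must verify that the radial extension can be carried out so that the enlarged components remain pairwise disjoint and individually contractible (in particular, that pushing the distinguished component to the origin does not cause it to meet the other extended components non-trivially). This is where the hypothesis $n > 4$ enters essentially: in PL dimension $\ge 5$, general position and Whitney-trick-type adjustments provide enough room to arrange the radial extensions compatibly with the required disjointness. Modulo this technical but standard PL verification, the cover by $n-1$ contractible-in-$M$ open sets is produced, completing the proof.
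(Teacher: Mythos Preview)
Your preliminary bound $\cat M\le n-1$ is correct and is exactly the fact the paper uses (a $k$-complex with free fundamental group has category at most $k-1$, because its $2$-skeleton already has category $\le 1$). The trouble is entirely in your ``absorption'' step, and the appeal to general position does not rescue it.

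First, the sentence ``$\phi$ pulls the cover of $X^{(n-1)}$ by the components of $V_{n-1}$ back to an open cover of $S^{n-1}$'' is false: the components of $V_{n-1}$ are thickenings of the open $(n-1)$-cells and do \emph{not} cover $X^{(n-1)}$; they miss $X^{(n-2)}$ entirely. So $\phi^{-1}(V_{n-1})$ is not a cover of $S^{n-1}$, and radially extending only $V_{n-1}$ leaves most of $e^n$ uncovered. Second, even if you radially extend all of the sets and then try to push one component of $V_{n-1}$ to the centre, every other radially extended component also accumulates at $0$, so adjoining a small ball $B_\varepsilon(0)$ to the chosen component forces intersections with the others. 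This is not a transversality problem: the sets in question are open and full-dimensional, and the Whitney trick says nothing about making such sets disjoint while preserving a covering condition. In short, your argument does not explain where the hypothesis $n>4$ is actually used.

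The paper's proof avoids all of this by working symmetrically from both ends. Take a triangulation $K$ of $M$ and its dual $L$; then $M$ is covered by regular neighbourhoods of $K^{(k)}$ and $L^{(l)}$ with $k+l+1=n$, and $\cat M\le \cat K^{(k)}+\cat L^{(l)}+1$. For $n>4$ one can choose $k,l\ge 2$, so \emph{both} skeleta carry the full (free) fundamental group, and the bound $\cat(\text{$j$-complex with free }\pi_1)\le j-1$ applies to each, giving $(k-1)+(l-1)+1=n-2$. The role of $n>4$ is transparent here: it is exactly what allows both halves to be at least $2$-dimensional.
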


\begin{proof}
If $X$ is a 2-dimensional (connected) CW-complex with free fundamental
group then $\cat X\le 1$, see e.g.~\cite[Theorem 12.1]{KRS}.  Hence,
if $Y$ is a $k$-dimensional complex with free fundamental group then
$\cat Y\le k-1$ for $k>2$.  Now, let $K$ be a triangulation of $M$,
and let $L$ be its dual triangulation.  Then $M\setminus L^{(l)}$ is
homotopy equivalent to $K^{(k)}$ whenever $k+l+1=n$.  Hence,
\[
\cat M\le \cat K^{(k)}+\cat L^{(l)}+1.
\]
Since $\pi_1(K)$ and $\pi_1(L)$ are free, we conclude that $\cat
K^{(k)}\le k-1$ and $\cat L^l\le l-1$ for $k,l>1$. Thus $\cat M\le k-1
+l-1 +1=n-2$.
\end{proof}

\section{Manifolds of higher LS category}
\label{higher}

Gromov~\cite[4.40]{Gr3} called a polyhedron $X$ {\em $n$-essential} if
there is no map $f:X\to K(\pi,1)^{(n-1)}$ to the $(n-1)$-dimensional
skeleton of an Eilenberg-MacLane complex that induces an isomorphism
of the fundamental groups. We extend his definition as follows.

\begin{definition}
\label{def:k-essent}
A~CW-space~$X$ is called {\em strictly $k$-essential},~$k>1$ if for
every CW-complex structure on~$X$ there is no map between the skeleta
$f:X^{(k)}\to K(\pi,1)^{(k-1)}$ that induces an isomorphism of the
fundamental groups.
\end{definition}

Clearly, a strictly $n$-essential space is Gromov $n$-essential, while
the converse is false. Furthermore, an $n$-dimensional polyhedron is
strictly $n$-essential if it is Gromov $n$-essential.

\begin{thm}\label{th:k-essent}
Let $M$ be a closed strictly ~$k$-essential manifold.  If its
dimension satisfies~$\dim M \geq k+1$, then its LS category also
satisfies $\cat M\ge k+1$.
\end{thm}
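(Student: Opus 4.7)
The plan is to adapt the strategy of the proof of \theoref{th:main} to arbitrary $k$, with strict $k$-essentiality taking over the role played there by non-freeness of $\pi_1$. Since strict $2$-essentiality is equivalent to $\pi_1(M)$ being non-free (if $\pi$ is free then $K(\pi,1)$ itself is $1$-dimensional, and the classifying map $fi$ can be cellularly pushed into $K(\pi,1)^{(1)}$), the case $k=2$ is already subsumed in \theoref{th:main}; I therefore concentrate on $k \geq 3$.

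The starting point is to apply \propref{l:evaluation} to $K := K(\pi,1)$ in degree $k$, producing a local system $\B$ with stalk $B = C_k(\wt K)/\Im \partial_{k+1}$ and a non-zero class $v \in H^k(K;\B)$ represented by the quotient cocycle $C_k(\wt K) \to B$. By \propref{prop:swgtprops}(4), $\wgt(v) \geq k$.

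The crux, and principal obstacle, is to prove that $(fi)^* v \neq 0$ in $H^k(M^{(k)};(fi)^*\B)$, where $f: M \to K$ is a classifying map and $i: M^{(k)} \hookrightarrow M$ is the inclusion of the $k$-skeleton (or, if $M$ is not triangulable, a $k$-polyhedron inducing iso on $\pi_1$). I would argue this via the cofiber sequence $K^{(k-1)} \hookrightarrow K \xrightarrow{p} K/K^{(k-1)}$. On universal covers, $\wt K/\wt K^{(k-1)}$ is $(k-1)$-connected and, by Hurewicz, $\pi_k(\wt K/\wt K^{(k-1)}) \cong H_k(\wt K/\wt K^{(k-1)}) = B$ as a $\zp$-module; the class $v$ is the pullback $q^*\alpha$ of the canonical class $\alpha$ under the quotient $q: \wt K \to \wt K/\wt K^{(k-1)}$. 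Since $\wt{M^{(k)}}$ is a free $\pi$-CW complex of dimension $k$, equivariant obstruction theory yields
\[
\bigl[\wt{M^{(k)}},\,\wt K/\wt K^{(k-1)}\bigr]_\pi \;\cong\; H^k(M^{(k)};\B),
\]
under which $(fi)^* v$ corresponds to the $\pi$-equivariant homotopy class of the composite $q \circ \wt{fi}$. If $(fi)^* v = 0$, this composite is $\pi$-equivariantly null-homotopic; passing to the $\pi$-quotient on source and target (note $(\wt K/\wt K^{(k-1)})/\pi = K/K^{(k-1)}$) yields a null-homotopy of $p \circ fi$, and the non-equivariant cofiber sequence then produces a map $g: M^{(k)} \to K^{(k-1)}$ whose composition with the inclusion $K^{(k-1)} \hookrightarrow K$ is homotopic to $fi$. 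Because $k \geq 3$, this inclusion induces iso on $\pi_1$, so $g$ must induce iso on $\pi_1$ --- contradicting strict $k$-essentiality.

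With $(fi)^* v \neq 0$ in hand, the rest follows the template of \theoref{th:main}. The injection $i^*: H^k(M; f^*\B) \hookrightarrow H^k(M^{(k)};(fi)^*\B)$, coming from $(M, M^{(k)})$ having only cells of relative dimension $\geq k+1$, gives $f^* v \neq 0$, with $\wgt(f^* v) \geq k$ by \propref{prop:swgtprops}(2). Twisted Poincar\'e duality (\theoref{th:PD}) produces $a := [M] \cap f^* v \neq 0$ in $H_{n-k}(M; \O \otimes f^*\B)$, where $n := \dim M$. A second application of \propref{l:evaluation}, now to $M$ in degree $n-k$, yields a class $w \in H^{n-k}(M;\scr C)$ with $a \cap w \neq 0$, hence $f^* v \cup w \neq 0$ by cap-cup associativity. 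The assumption $\dim M \geq k+1$ forces $n - k \geq 1$, so $\wgt(w) \geq 1$ by \propref{prop:swgtprops}(1) and $\wgt(f^* v \cup w) \geq k+1$ by (3). Therefore $\cat M \geq k+1$.
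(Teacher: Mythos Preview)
Your overall strategy is correct and in fact coincides with the paper's: the class you build via \propref{l:evaluation} is exactly the paper's obstruction class~$\alpha$ (the primary obstruction to retracting~$K$ into $K^{(k-1)}$, with stalk $\pi_{k-1}(K^{(k-1)})\cong C_k(\wt K)/\Im\partial_{k+1}=B$), and the endgame via Poincar\'e duality and category weight is identical.

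There is, however, a genuine gap at the step you label ``the non-equivariant cofiber sequence then produces a map $g:M^{(k)}\to K^{(k-1)}$\ldots''.  For a cofibration $A\hookrightarrow X\to X/A$ the induced sequence
\[
[Z,A]\;\longrightarrow\;[Z,X]\;\longrightarrow\;[Z,X/A]
\]
is \emph{not} exact in general (cofibre sequences are coexact: they give exactness under $[-,Y]$, not under $[Z,-]$).  So from $p\circ fi\simeq *$ you cannot formally conclude that $fi$ factors through $K^{(k-1)}$.  What is needed is exactly an obstruction-theory argument, and this is how the paper proceeds: it observes that the primary obstruction to lifting $\tilde f:M\to K$ through $K^{(k-1)}\hookrightarrow K$ is a class in $H^k(M;E)$ with stalk $\pi_{k-1}(K^{(k-1)})\cong B$, that this class is precisely $\tilde f^*\alpha$, and that its vanishing would give (after the usual modification on the $(k-1)$-skeleton, which leaves $\pi_1$ untouched) a map $M^{(k)}\to K^{(k-1)}$ inducing an isomorphism on~$\pi_1$---contradicting strict $k$-essentiality.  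In your language: the class $(fi)^*v\in H^k(M^{(k)};\B)$ \emph{is} the obstruction to the lift, so its vanishing yields~$g$ directly; the detour through $K/K^{(k-1)}$ and the ``cofiber sequence'' is both unnecessary and, as written, unjustified.  Once you make this identification, your argument and the paper's are the same.
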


\begin{proof}
We first consider the case~$k=2$. If~$\cat M\le 2$, then, by
\theoref{th:main},~$\pi_1(M)$ is free.  Hence there is a map~$f:M\to
\vee S^1$ that induces an isomorphism of the fundamental groups,
and~$M$ is not strictly $2$-essential.

Now assume~$k\ge 3$.  Let~$K=K(\pi_1(M),1)$.  Consider a map
\[
f: M^{(k-1)} \to K^{(k-1)}
\]
such that the restriction~$f|_{M^{(2)}}$ is the identity homeomorphism
of the 2-skeleta~$M^{(2)}$ and~$K^{(2)}$. We consider the problem of
extension of~$f$ to~$M$.

We claim that the first obstruction ~$o(f)\in H^k(M;E)$ (taken with
coefficients in a local system~$E$ with the stalk
$\pi_{k-1}(K^{(k-1)})$) to the extension is not equal to zero.

Indeed, if~$o(f)=0$, then there exists a map~$\ov f:M^{(k)}\to
K^{(k-1)}$ which coincides with~$f$ on the~$(k-2)$-skeleton. The
map
\[
\ov f_*: \pi_1(M^{(k)})\to \pi_1(K^{(k-1)})
\]
can be viewed as an endomorphism of~$\pi_1(M)$ that is identical on
generators, and therefore~$\ov f_*$ is an isomorphism.  Hence~$M$ is
not strictly~$k$-essential.

Consider the commutative diagram
\[
\CD
M^{(k-1)} @>f>> K^{(k-1)} @>{\rm id}>> K^{(k-1)}\\
@ViVV @VjVV @.\\
M @>\tilde f>> K
\endCD
\]
where~$i$ and~$j$ are the inclusions of the skeleta.  Let~$\alpha$ be
the first obstruction to the extension of id to a map~$K \to
K^{(k-1)}$.  By commutativity of the above diagram, we
have~$o(f)=\tilde f^*(\alpha)$.  Now, asserting as in the proof of
\theoref{th:main}, we get that~$\tilde f^*(\alpha)\cup v\ne 0$ for
some~$v$ with~$\dim v=\dim M-k$.  Since~$\dim M>k$, we conclude
that~$\dim v\ge 1$ and thus~$\cat M\ge k+1$.
\end{proof}

\begin{rem}
If a closed manifold~$M^n$ is~$n$-essential then~$\cat M=n$, see
e.g.~\cite{KR1} and~\cite[Theorem 12.5.2]{SGT}.
\end{rem}

The following theorem for $n\geq 3$ was proven in~\cite[Theorem
A]{Ber} and~\cite[Theorem 20]{Sv}, see also~\cite[Proposition
2.51]{CLOT}. The case $n=2$ was proved in \cite{DR}.

\begin{theorem}
\label{t:berstein}
If $\dim X=\cat X=n$, then $u^{n}_X\ne 0$ where
$u_X=j^*(\ber)\in H^1(X;I(\pi))$, $j:X\to K(\pi,1)$ induces an isomorphism
of the fundamental groups, and $\ber\in H^1(\pi, I(\pi))$ is the
Berstein-\v{S}varc class.
 {\rm (}For the case
~$n=\infty$ this means that~$u^{k}\ne 0$ for all~$k$.{\rm )}
\end{theorem}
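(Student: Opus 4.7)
The plan is to use obstruction theory applied to the Ganea fibrations, following the classical Berstein--\v{S}varc approach. Recall \v{S}varc's characterization of LS category: $\cat X$ equals the least integer $n$ for which the $n$-th Ganea fibration $p_n \colon G_n X \to X$, whose fiber is the iterated join $(\Omega X)^{*(n+1)}$, admits a section. Thus the hypothesis $\cat X = n$ translates into the statement that $p_{n-1}$ admits no section.

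First I would locate the primary obstruction to sectioning $p_{n-1}$. Its fiber $F$ is the $n$-fold join of $\Omega X$; since $\pi_0(\Omega X)\cong \pi$, a standard join computation shows that $F$ is $(n-2)$-connected with
\[
\pi_{n-1}(F)\cong \tilde H_{n-1}(F)\cong I(\pi)^{\otimes n}
\]
as a $\pi$-module (the action arising as monodromy of the fibration). Consequently the primary obstruction $o(p_{n-1})$ lives in $H^{n}(X;I(\pi)^{\otimes n})$ with local coefficients, which is exactly the group containing $u_X^{n}$.

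The hypothesis $\dim X = n$ now plays a crucial role: since $X$ has no cells of dimension greater than $n$, all higher obstructions to extending a partial section of $p_{n-1}$ vanish tautologically, and $p_{n-1}$ admits a section if and only if $o(p_{n-1})=0$. Combined with $\cat X = n$, this forces $o(p_{n-1})\ne 0$. The case $n=\infty$ is handled by applying the same reasoning for each finite $k$, using that $\cat X\ge k$ rules out a section of $p_{k-1}$ while a $k$-skeleton of $X$ carries no cells above dimension $k$.

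The heart of the proof, and the main obstacle, is the identification $o(p_{n-1})=u_X^{n}$. For the universal case $X=K(\pi,1)$ one proves by induction on $k$ that the primary obstruction to a section of $p_k$ over $K(\pi,1)$ equals $\ber^{k+1}$; the inductive step uses the description of $G_{k+1}$ as a homotopy pushout built from $G_k$, together with the defining short exact sequence $0\to I(\pi)\to \Z[\pi]\to \Z\to 0$ of $\ber$. Naturality of the obstruction with respect to $j\colon X\to K(\pi,1)$ then gives $o(p_{n-1})=j^*(\ber^{n})=u_X^{n}$, and non-vanishing follows. The delicate point is that the Ganea construction is only homotopy-functorial, so one must track the obstruction class with care through the iterated pushouts; this is presumably why the base case $n=2$, which anchors the inductive identification, required the separate treatment of \cite{DR}.
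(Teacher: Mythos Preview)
The paper does not prove this theorem; it is quoted as a known result, with the case $n\ge 3$ attributed to Berstein \cite{Ber} and \v{S}varc \cite{Sv} (see also \cite[Proposition~2.51]{CLOT}) and the case $n=2$ to \cite{DR}. So there is no in-paper argument to compare your proposal against.

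That said, your outline is precisely the classical Berstein--\v{S}varc argument those citations point to: translate $\cat X>n-1$ into the non-existence of a section of the Ganea--\v{S}varc fibration $p_{n-1}$, use the $(n-2)$-connectivity of the fiber $(\Omega X)^{*n}$ together with $\dim X=n$ to reduce everything to a single primary obstruction in $H^{n}(X;I(\pi)^{\otimes n})$, and identify that obstruction with $u_X^{\,n}$ by naturality from the universal case $X=K(\pi,1)$. Your diagnosis of why $n=2$ requires separate treatment is also on target and matches the paper's separate citation of \cite{DR}: for $n=2$ the fiber $\Omega X*\Omega X$ is only path-connected, not simply connected (e.g.\ $S^0*S^0=S^1$), so the standard primary-obstruction framework does not apply cleanly and the identification of the obstruction with $\ber^{2}$ needs an ad hoc argument.
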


\begin{prop}\label{p:nonfree}
For every non-free finitely presented group~$\pi$, there exists a
closed~$4$-dimensional manifold~$M$ with fundamental group~$\pi$
and~$\cat M=3$.
\end{prop}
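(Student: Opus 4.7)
The plan is to realize $M$ as the boundary $\partial N$ of a regular neighborhood of a $2$-dimensional presentation complex of $\pi$ embedded in $\R^5$. The lower bound $\cat M \ge 3$ will come directly from \theoref{th:main}, and the upper bound $\cat M \le 3$ will follow from \theoref{t:berstein} once I verify that the classifying map of $M$ factors through a $2$-complex.

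Concretely, let $K$ be a finite $2$-dimensional CW-complex with $\pi_1(K) = \pi$, for instance the presentation complex of any finite presentation of $\pi$; embed $K$ piecewise-linearly in $\R^5$, which is possible by general position since $2\dim K + 1 = 5$. Let $N \subset \R^5$ be a closed regular neighborhood of $K$, so that the collapse $N \to K$ is a homotopy equivalence, and set $M := \partial N$. Then $M$ is a closed orientable PL $4$-manifold. Since $K$ has codimension $\ge 3$ in $N$, every loop in $N$ can be pushed off $K$ and then radially onto $\partial N$, and a similar push applies to any bounding singular disc; hence the inclusion $\partial N \hookrightarrow N$ induces an isomorphism on $\pi_1$, and $\pi_1(M) = \pi_1(N) = \pi_1(K) = \pi$.

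The composite
\[
\tilde f : M \hookrightarrow N \simeq K \hookrightarrow K(\pi,1)
\]
is then, up to homotopy, a classifying map for $M$ that factors through the $2$-dimensional complex $K$. For the Berstein--\v Svarc class $\ber \in H^1(\pi; I(\pi))$, this forces
\[
\tilde f^*(\ber)^{4} = \tilde f^*(\ber^{4}) = 0,
\]
since $\ber^{4}$ pulls back through $H^{4}(K;\, I(\pi)^{\otimes 4}) = 0$ by dimension. The contrapositive of \theoref{t:berstein} therefore gives $\cat M \le 3$. Combined with \theoref{th:main}, which applies because $\pi$ is non-free and $\dim M = 4 \ge 3$, this yields $\cat M = 3$. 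The only delicate point is verifying that the boundary inclusion is a $\pi_1$-isomorphism, but this is the standard codimension-$\ge 3$ general-position argument for regular neighborhoods.
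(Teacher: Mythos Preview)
Your proof is correct and follows essentially the same route as the paper: take the boundary of a regular neighborhood of a $2$-complex $K$ with $\pi_1(K)=\pi$ embedded in $\R^5$, use the factorization of the classifying map through $K$ to kill $u_M^4$ and invoke \theoref{t:berstein} for the upper bound, and use \theoref{th:main} for the lower bound. The only cosmetic difference is that the paper takes $K$ to be the $2$-skeleton of $K(\pi,1)$ rather than a presentation complex, and it is terser about the $\pi_1$-isomorphism on the boundary.
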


\begin{proof}
Let $K$ be a~$2$-skeleton of~$K(\pi,1)$. Take an embedding of $K$ in~$\R^5$
and let~$M=\partial N$ be the boundary of the regular neighborhood
$N$ of this skeleton. Then there is a retraction $N \to K$, and, clearly,
the map $f: M \subset N \to K$ induces an isomorphism of fundamental groups.
Now, let $u_M\in H^1(M; I(\pi))$ be the class described in
the Theorem~\ref{t:berstein}.
Then $u_M=f^*u_K$, and hence $u_M^{4}=0$.
 Therefore~$\cat M<4$ by \theoref{t:berstein}, and thus $\cat M= 3$.
\end{proof}

Let~$M_f$ be the mapping cylinder of~$f:X\to Y$.  We use the notation
$\pi_*(f)=\pi_*(M_f,X)$.  Then~$\pi_i(f)=0$ for~$i\le n$ amounts to
saying that it induces isomorphisms~$f_*:\pi_i(X_1)\to \pi_i(Y_1)$ for
$i\le n$ and an epimorphism in dimension~$n+1$. Similar notation
$H_*(f)=H_*(M_f,X)$ we use for homology.

\begin{lemma}
\label{l:join} Let~$f_j:X_j\to Y_j$ be a family of maps of~CW-spaces such
that~$H_i(f_j)=0$ for~$i\le n_j$. Then~$ H_i(f_1\wedge
\cdots \wedge f_s)=0$ for~$i\le \min\{n_j\}$.
\end{lemma}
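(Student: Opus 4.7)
The plan is to reduce to the case $s=2$ by induction on $s$: if $g:=f_1\wedge\cdots\wedge f_{s-1}$ satisfies the conclusion with bound $\min\{n_1,\ldots,n_{s-1}\}$, then applying the $s=2$ statement to $g$ and $f_s$ yields the general claim. So the core of the argument treats two maps $f_j:X_j\to Y_j$ with $H_i(f_j)=0$ for $i\le n_j$, and shows $H_i(f_1\wedge f_2)=0$ for $i\le\min\{n_1,n_2\}$. Throughout, I would use the identification $H_i(f)\cong\widetilde H_i(C_f)$, where $C_f$ denotes the (reduced) mapping cone.

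First I would replace each $f_j$ by the inclusion of $X_j$ into its mapping cylinder, so without loss of generality the $f_j$ are pointed cofibrations $X_j\subset Y_j$. This lets us consider the honest filtration
\[
X_1\wedge X_2\ \subset\ Y_1\wedge X_2\ \subset\ Y_1\wedge Y_2.
\]
The long exact sequence of this triple yields
\[
\cdots\to H_i(Y_1\wedge X_2,X_1\wedge X_2)\to H_i(Y_1\wedge Y_2,X_1\wedge X_2)\to H_i(Y_1\wedge Y_2,Y_1\wedge X_2)\to\cdots,
\]
so by exactness it suffices to show that the two flanking groups vanish for $i\le\min\{n_1,n_2\}$.

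For the left flanking group, the key identification is $C_{f_1\wedge 1_{X_2}}\cong C_{f_1}\wedge X_2$, a standard consequence of the compatibility of the reduced cone with smash products (since $CA\wedge B\cong C(A\wedge B)$). Together with the Künneth theorem, this gives
\[
\widetilde H_i(C_{f_1}\wedge X_2)\cong\bigoplus_{p+q=i}\widetilde H_p(C_{f_1})\otimes\widetilde H_q(X_2)\ \oplus\ \bigoplus_{p+q=i-1}\mathrm{Tor}\bigl(\widetilde H_p(C_{f_1}),\widetilde H_q(X_2)\bigr).
\]
Since $\widetilde H_p(C_{f_1})=0$ for $p\le n_1$ by hypothesis, and since reduced homology vanishes in negative degrees, every possibly nontrivial summand forces $i\ge n_1+1$. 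Hence $H_i(f_1\wedge 1_{X_2})=0$ for $i\le n_1$. The right flanking group $H_i(1_{Y_1}\wedge f_2)$ is handled by the symmetric argument, giving vanishing for $i\le n_2$. Combining these in the triple sequence yields $H_i(f_1\wedge f_2)=0$ for $i\le\min\{n_1,n_2\}$, as required.

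The main obstacle, while largely technical, lies in setting up the cone identification and the Künneth formula cleanly in the pointed CW-category; both hold once the $f_j$ have been arranged to be pointed cofibrations with nondegenerate basepoints, which is automatic after the mapping-cylinder replacement. Everything else is routine bookkeeping in the long exact sequence of the triple.
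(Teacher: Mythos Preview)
Your proof is correct and follows essentially the same strategy as the paper's: replace the $f_j$ by cofibrations via mapping cylinders, then combine the K\"unneth formula for smash products with an exact sequence argument. The paper's version is terser---it invokes K\"unneth and the long exact sequence of the pair $(M(f_1)\wedge\cdots\wedge M(f_s),\,X_1\wedge\cdots\wedge X_s)$ all at once for general $s$---whereas you reduce to $s=2$ by induction and use the triple $X_1\wedge X_2\subset Y_1\wedge X_2\subset Y_1\wedge Y_2$, but the underlying idea is the same and your version makes the K\"unneth step (via the cofiber identification $C_{f_1\wedge 1_{X_2}}\cong C_{f_1}\wedge X_2$) more transparent.
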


\begin{proof}
Note that
\[
M(f_1\wedge \cdots \wedge f_s)\cong Y_1\wedge \cdots \wedge Y_s\cong
M(f_1)\wedge \cdots \wedge M(f_s).
\]
Now, by using the K\"unneth formula and considering the homology exact
sequence of the pair~$(M(f_1)\wedge\cdots \wedge M(f_s), X_1\wedge
\cdots \wedge X_s)$, we obtain the result.
\end{proof}

\begin{prop}\label{p:join}
Let~$f_j:X_j\to Y_j$,~$3\le j\le s$ be a family of maps of~CW-spaces such
that~$\pi_i(f_j)=0$ for~$i\le n_j$.  Then the joins
satisfy
\[
\pi_k(f_1\ast f_2\ast\dots\ast f_s)=0
\]
for~$k\le\min\{n_j\}+s-1$.
\end{prop}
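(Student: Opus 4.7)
\emph{Plan.} The approach is to reduce the proposition to the homology bound of Lemma~\ref{l:join}, by using the classical identification of an iterated join with an iterated suspension of the smash product, and then to translate between homotopy and homology via the Hurewicz theorem.

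The key identity I would invoke is the natural homotopy equivalence, valid for pointed CW-complexes,
\[
W_1 \ast W_2 \ast \cdots \ast W_s \;\simeq\; \Sigma^{s-1}(W_1 \wedge W_2 \wedge \cdots \wedge W_s),
\]
obtained by iterating the standard $W \ast W' \simeq \Sigma(W \wedge W')$. Naturality in the inputs lifts this to a homotopy equivalence of maps $f_1 \ast \cdots \ast f_s \simeq \Sigma^{s-1}(f_1 \wedge \cdots \wedge f_s)$, hence a suspension isomorphism
\[
H_k(f_1 \ast \cdots \ast f_s) \cong H_{k-(s-1)}(f_1 \wedge \cdots \wedge f_s).
\]

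Next I would pass the homotopy hypothesis to a homology hypothesis for each factor. Since the pair $(M_{f_j}, X_j)$ is $n_j$-connected, it is simply connected when $n_j \ge 1$, so the relative Hurewicz theorem yields $H_i(f_j) = 0$ for $i \le n_j$ (the degree $i = 0$ is automatic for connected spaces). Lemma~\ref{l:join} then gives $H_i(f_1 \wedge \cdots \wedge f_s) = 0$ for $i \le \min_j n_j$, and the suspension isomorphism above promotes this to
\[
H_k(f_1 \ast \cdots \ast f_s) = 0 \quad \text{for } k \le \min_j n_j + s - 1.
\]

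To close the argument I would translate homology back to homotopy. Because the iterated joins $X_1 \ast \cdots \ast X_s$ and $Y_1 \ast \cdots \ast Y_s$ are $(s-2)$-connected (being $(s-1)$-fold suspensions), the relative pair is simply connected once $s \ge 3$. An inductive application of the relative Hurewicz theorem, one dimension at a time, then upgrades the vanishing of homology to $\pi_k(f_1 \ast \cdots \ast f_s) = 0$ for $k \le \min_j n_j + s - 1$, as required. The main obstacle is the Hurewicz round trip: both translations require simple connectivity of the relevant pair. The forward direction is secured by $n_j \ge 1$, while the reverse direction is exactly what forces a hypothesis like $s \ge 3$ (the natural reading of the index range in the statement), so that the iterated joins are simply connected. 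With these in place, the remainder is routine bookkeeping with the suspension isomorphism and Lemma~\ref{l:join}.
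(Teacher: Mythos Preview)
Your proposal is correct and follows essentially the same route as the paper's proof: relative Hurewicz to pass from $\pi_*$ to $H_*$ on each factor, Lemma~\ref{l:join} on the smash product, the join--suspension equivalence to shift degrees by $s-1$, and then relative Hurewicz again (using simple connectivity of the iterated join for $s\ge 3$) to return to $\pi_*$. The only nuance is that in the forward Hurewicz step the spaces $X_j$ need not be simply connected, so ``the pair is simply connected'' is not quite the hypothesis of the standard relative Hurewicz theorem; as the paper does, one should invoke the version valid for non-simply connected $X_j$ (e.g.\ \cite[Theorem~4.37]{Ha}), from which the vanishing of $H_i(f_j)$ for $i\le n_j$ still follows.
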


\begin{proof}
By the version of the Relative Hurewicz Theorem for non-simply
connected~$X_j$ \cite[Theorem 4.37]{Ha}, we obtain~$H_i(f_j)=0$ for
$i\le n_j$.  By \lemref{l:join} we obtain
that~$H_k(f_1\wedge\cdots\wedge f_s)=0$ for~$k\le\min\{n_j\}$.  Since
the join~$A_1\ast\dots\ast A_s$ is homotopy equivalent to the iterated
suspension~$\Sigma^{s-1}(A_1\wedge\dots\wedge A_s)$ over the smash
product, we conclude that~$H_k(f_1\ast\dots\ast f_s)=0$
for~$k\le\min\{n_j\}+s-1$.  Since~$X_1\ast\dots\ast X_s$ is simply
connected for~$s\ge 3$, by the standard Relative Hurewicz Theorem we
obtain that~$\pi_k(f_1\ast\dots \ast f_s)=0$ for $k\le
\min\{n_j\}+s-1$.
\end{proof}

Given two maps~$f:Y_1\to X$ and~$g:Y_2\to X$, we set
\[
Z=\{(y_1,y_2,t)\in Y_1\ast Y_2\mid f(y_1)=g(y_2)\}
\]
and define the {\em fiberwise join}, or {\em join over~$X$} of~$f$
and~$g$ as the map
\[
f{\ast_X}g:Z\to X,\quad (f{\ast_X}g)(y_1,y_2, t)=f(y_1)
\]
Let~$p_0^X:PX\to X$ be the Serre path fibration. This means that
$PX$ is the space of paths on~$X$ that start at the base point of the
pointed space~$X$, and~$p_0(\alpha)=\alpha(1)$.  We denote by
$p_n^X;G_n(X)\to X$ the~$n$-fold fiberwise join of~$p_0$.

The proof of the following theorem can be found in \cite{CLOT}.

\begin{thm}[Ganea, \v Svarc]
\label{t:ganea}
For a~CW-space~$X$,~$\cat(X)\le n$ if and only if there exists a
section of~$p_n:G_n(X)\to X$.
\end{thm}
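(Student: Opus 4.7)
The plan is to establish the Ganea--Švarc correspondence by viewing a section $s\colon X \to G_n(X)$ as the combinatorial datum of a categorical open cover of $X$ by $n+1$ sets: the barycentric weights of $s$ at each point will furnish a partition of unity, while the $PX$-coordinates will furnish the null-homotopies of the sets in that cover.

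First I would handle the direction ``section exists $\Rightarrow \cat X \le n$''. A point of the fiber $G_n(X)_x$ is a join-class $[a_0,t_0;\ldots;a_n,t_n]$ with $a_i\in p_0^{-1}(x)$, $t_i\ge 0$, and $\sum_i t_i=1$, where $a_i$ is forgotten whenever $t_i=0$. Writing a section as $s(x)=[a_0(x),t_0(x);\ldots;a_n(x),t_n(x)]$, continuity of $s$ makes each $t_i\colon X\to[0,1]$ continuous. Set $U_i:=\{x\in X: t_i(x)>0\}$; these are open and, since the weights sum to $1$, cover $X$. On $U_i$ the assignment $\sigma_i(x):=a_i(x)$ gives a continuous section of $p_0$, so the formula $H_i(x,t):=\sigma_i(x)(1-t)$ interpolates from $\sigma_i(x)(1)=x$ at $t=0$ to $\sigma_i(x)(0)=x_0$ at $t=1$, exhibiting the inclusion $U_i\hookrightarrow X$ as null-homotopic. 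Thus $\{U_0,\ldots,U_n\}$ is a categorical open cover of size $n+1$, and $\cat X\le n$.

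For the converse, given an open cover $U_0,\ldots,U_n$ with null-homotopies $H_i\colon U_i\times I\to X$ satisfying $H_i(\cdot,0)=\mathrm{id}$ and $H_i(\cdot,1)=x_0$, I reverse the parameter to obtain sections $\sigma_i\colon U_i\to PX$ of $p_0$ via $\sigma_i(x)(t):=H_i(x,1-t)$. Since any CW-space is paracompact, choose a partition of unity $\{\phi_i\}$ subordinate to $\{U_i\}$, and define
\[
s(x):=[\sigma_0(x),\phi_0(x);\ldots;\sigma_n(x),\phi_n(x)]\in G_n(X)_x.
\]
Since each $\sigma_i(x)$ lies over $x$ whenever it appears with positive weight, one has $p_n\circ s=\mathrm{id}_X$, so $s$ is the desired section.

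I expect the only delicate point to be verifying that the above formula is well-defined and continuous at points $x\notin U_i$, where $\sigma_i(x)$ is not even defined. This works precisely because $\mathrm{supp}(\phi_i)\subseteq U_i$: whenever $x\notin U_i$ the weight $\phi_i(x)$ vanishes, and the equivalence relation built into the iterated fiberwise join identifies any choice of $\sigma_i(x)$ with any other. This collapsing feature is exactly what distinguishes the fiberwise join from, say, the fiberwise product, and it is what makes $G_n(X)$ the correct total space encoding Lusternik--Schnirelmann category $\le n$.
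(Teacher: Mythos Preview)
The paper does not prove this theorem; it simply refers the reader to \cite{CLOT}. Your argument is the classical \v Svarc-genus proof---translating between a section of the $(n+1)$-fold fiberwise join of $p_0$ and a categorical open cover of size $n+1$ via barycentric coordinates and a partition of unity---and it is correct; it is essentially the argument one finds in the cited monograph. The only minor caveat is terminological: if ``CW-space'' is read as ``space of the homotopy type of a CW-complex'' rather than ``CW-complex'', then paracompactness is not immediate, and you should first observe that both $\cat$ and the existence of a section of $p_n$ are homotopy invariants, replacing $X$ by an honest CW-complex before invoking the partition of unity.
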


\begin{prop}\label{p:sum}
The connected sum~$S^k\times S^l\#\cdots \#S^k\times S^l$ is a space
of LS-category~$2$.
\end{prop}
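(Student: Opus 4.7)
The plan is to establish matching upper and lower bounds on $\cat M$, where $M$ denotes the $r$-fold connected sum $\#^{r}(S^k \times S^l)$ (assuming $k,l \geq 1$ so that the summands are connected and the problem is nontrivial).

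For the lower bound $\cat M \geq 2$, I will run a cup-length argument. For each summand there is a degree-one collapse map $M \to S^k \times S^l$; pulling back the two tautological generators of $H^k(S^k \times S^l)$ and $H^l(S^k \times S^l)$, whose cup product generates $H^{k+l}(S^k \times S^l) \cong \Z$, I obtain classes $\alpha, \beta \in H^{*}(M)$ with $\alpha \cup \beta \neq 0$. By items (1) and (3) of \propref{prop:swgtprops} this forces $\cat M \geq 2$.

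For the upper bound $\cat M \leq 2$, the key geometric input is that the complement of a point in $M$ is a wedge of spheres. A single punctured product $(S^k \times S^l) \setminus \{\pt\}$ deformation retracts onto its $(k+l-1)$-skeleton $S^k \vee S^l$, and more generally, removing a point from a connected sum yields the wedge of summands with a point removed: $(M_1 \# \cdots \# M_r) \setminus \{\pt\} \simeq \bigvee_{i=1}^{r} (M_i \setminus \{\pt\})$. Combining these gives $M \setminus \{\pt\} \simeq \bigvee^{r}(S^k \vee S^l)$, which, under the assumption $k,l \geq 1$, is homotopy equivalent to the suspension $\Sigma(\bigvee^{r}(S^{k-1} \vee S^{l-1}))$ and therefore has LS category at most~$1$.

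With this in hand, I pick a small open ball $V \subset M$ around the chosen puncture point $p$ (contractible in $M$) and cover the open complement $M \setminus \{p\}$ by two open sets $U_1, U_2$, each contractible in $M \setminus \{p\}$ and hence in $M$. The resulting three-element cover $\{V, U_1, U_2\}$ witnesses $\cat M \leq 2$, finishing the proof. The main obstacle I anticipate is the homotopy identification $(M_1 \# \cdots \# M_r) \setminus \{\pt\} \simeq \bigvee_{i} (M_i \setminus \{\pt\})$; this I would prove by induction on $r$, using a CW-structure on $N_1 \# N_2$ with a single top cell, so that removing an interior point of the top cell leaves a complex that deformation-retracts onto the wedge $(N_1 \setminus \{\pt\}) \vee (N_2 \setminus \{\pt\})$ along the connecting $(n-1)$-sphere.
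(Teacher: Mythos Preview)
Your proof is correct and follows the paper's own alternative argument: the paper notes that the connected sum minus a point is homotopy equivalent to a wedge of spheres (hence has category at most~$1$), which is exactly your upper-bound mechanism, while the lower bound via cup-length is the obvious complement that the paper leaves implicit. The paper also mentions a first route via Hardy's double-mapping-cylinder theorem, but your argument coincides with its second, more elementary, approach.
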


\begin{proof}
This can be deduced from a general result of K. Hardy~\cite{H} because
the connected sum of two manifolds can be regarded as the double
mapping cylinder. Alternatively, one can note that, after removing a
point, the manifold on hand is homotopy equivalent to the wedge of
spheres.
\end{proof}

\begin{thm}
\label{t:high}
For every finitely presented group~$\pi$ and~$n\ge 5$, there is a
closed~$n$-manifold~$M$ of LS-category~$3$ with~$\pi_1(M)=\pi$.
\end{thm}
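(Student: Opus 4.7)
The plan is to construct the desired manifold $M$ in two steps: first build a closed $n$-manifold $M_0$ with $\pi_1(M_0)=\pi$ and $\cat M_0 \le 3$ by thickening a $2$-dimensional presentation complex of $\pi$ inside $\mathbb{R}^{n+1}$, then (if necessary to secure the lower bound, i.e.\ when $\pi$ is free) form the connected sum of $M_0$ with a simply connected closed $n$-manifold $W$ of LS category exactly $3$.

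Let $K$ be a finite $2$-dimensional CW-complex with $\pi_1(K)=\pi$. Since $n+1\ge 6>2\dim K$, we may PL-embed $K\hookrightarrow\mathbb{R}^{n+1}$; let $N$ be a closed regular neighborhood and set $M_0:=\partial N$. General position together with the deformation retraction $N\searrow K$ makes the composite $f\colon M_0\hookrightarrow N\to K$ induce an isomorphism of fundamental groups, so $\pi_1(M_0)\cong\pi$. Moreover $f$ behaves as a stratified sphere-bundle-like projection with generic fiber $S^{n-2}$, and the heuristic bound $\cat M_0\le\cat K+\cat S^{n-2}\le 2+1=3$ will be made rigorous either by replacing $f$ with a Hurewicz fibration and invoking the Ganea product inequality $\cat E\le\cat B+\cat F$, or by using \theoref{t:ganea} together with the join-connectivity estimate of \propref{p:join} to construct a section of the Ganea fibration $p_3\colon G_3(M_0)\to M_0$.

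If $\pi$ is not free, then \theoref{th:main} gives $\cat M_0\ge 3$, so we take $M:=M_0$. If $\pi=F_g$ is free of rank $g$, then $K$ may be taken to be a wedge of circles, whence $M_0$ is diffeomorphic to $\#^g(S^1\times S^{n-1})$ with $\cat M_0=2$ by \propref{p:sum}. We then boost by setting $M:=M_0\# W$ for a simply connected closed $n$-manifold $W$ with $\cat W=3$: for $n\ge 6$ take $W=\mathbb{CP}^2\times S^{n-4}$ (whose category is witnessed by the cup product of the K\"ahler class of weight $2$ with the top class of $S^{n-4}$ of weight $1$), and for $n=5$ take $W=SU(3)/SO(3)$, the Wu manifold. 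Since $W$ is simply connected, van Kampen gives $\pi_1(M)=\pi_1(M_0)*\pi_1(W)=\pi$; the collapse map $M\to W$ pulls back the category-witnessing cup product on $W$ to a nonzero class of category weight $3$ on $M$, giving $\cat M\ge 3$; and the upper bound $\cat M\le 3$ follows from $\cat M_0,\cat W\le 3$ together with the removed-point wedge-decomposition argument used in the proof of \propref{p:sum} applied to the connected sum.

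The principal obstacle is the upper bound $\cat M_0\le 3$, because the projection $f\colon M_0\to K$ is not a genuine fibration; transferring the bound $\cat K\le 2$ from the $2$-dimensional spine to the $n$-dimensional boundary $\partial N$ requires either a careful homotopy-fibration replacement combined with the Ganea product inequality, or an obstruction-theoretic construction of a section of $p_3$ using the join-connectivity input of \propref{p:join}. A secondary subtlety is establishing $\cat(SU(3)/SO(3))=3$ for the $n=5$, $\pi=1$ subcase, which requires refining the mod-$2$ cup-length lower bound by a category-weight or secondary-operation argument.
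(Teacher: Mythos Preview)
For non-free $\pi$, your regular-neighborhood construction is genuinely different from the paper's: the paper starts with $M'=\#^s(S^1\times S^{n-1})$ and performs $2$-surgery along the relators, obtaining a cobordism $X$ with $\cat X\le 3$ such that the inclusion $M\hookrightarrow X$ is $(n-2)$-connected; then the Ganea--\v{S}varc section-lifting argument (\theoref{t:ganea} combined with \propref{p:join}) gives $\cat M\le 3$. Your $M_0=\partial N$ can be handled by exactly the same machinery once you supply the connectivity of $f\colon M_0\to K$, which follows from Poincar\'e--Lefschetz duality on the universal cover: $H_i(\wt N,\wt{M_0})\cong H^{\,n+1-i}_c(\wt K)=0$ for $i\le n-2$ since $\wt K$ is $2$-dimensional, so $f$ is $(n-2)$-connected and $\pi_i(\Omega f)=0$ for $i\le n-3$. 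This is arguably a cleaner route to the same Ganea--\v{S}varc input than the surgery argument. Your first alternative, however --- a ``product inequality $\cat E\le\cat B+\cat F$'' for the fibration replacement of $f$ --- is not a general theorem, and in any case the homotopy fiber of $f$ is not $S^{n-2}$; only the Ganea--\v{S}varc route actually goes through.

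Your free-group case contains a genuine error at $n=5$. Every simply connected $5$-dimensional CW-complex $W$ satisfies $\cat W\le\lfloor 5/2\rfloor=2$ by the standard connectivity bound $\cat X\le\dim X/(r+1)$ for $r$-connected $X$; in particular $\cat\bigl(SU(3)/SO(3)\bigr)=2$, and no category-weight or secondary-operation refinement can push this to $3$. Hence no simply connected booster $W$ with $\cat W=3$ exists in dimension $5$, and the connected-sum strategy collapses there. The paper sidesteps this entirely by taking $M=(\#^s\,S^1\times S^2)\times S^{n-3}$: the product inequality gives $\cat M\le 2+1=3$, while for $s\ge 1$ the ordinary cup-length is already $3$ (a $1$-class, its Poincar\'e-dual $2$-class, and the $S^{n-3}$-class).
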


\begin{proof}
If the group~$\pi$ is the free group of rank~$s$, we let~$M'$ be the
$k$-fold connected sum~$S^1\times S^{2} \# \cdots \#S^1\times S^{2}$.
Then~$M'$ is a closed~$3$-manifold of LS category~$2$ with
$\pi_1(M')=F_s$.  Then the product manifold~$M=M'\times S^{n-3}$ has
cuplength~$3$ and is therefore the desired manifold.

Now assume that the group~$\pi$ is not free.  We fix a presentation
of~$\pi$ with~$s$ generators and~$r$ relators.  Let~$M'$ be
the~$k$-fold connected sum~$S^1\times S^{n-1} \# \cdots \#S^1\times
S^{n-1}$.  Then~$M'$ is a closed~$n$-manifold of the category~$2$ with
$\pi_1(M')=F_s$.  For every relator~$w$ we fix a nicely imbedded
circle~$S^1_w\subset M'$ such that~$S_w^{-1}\cap S_v^{-1}=\emptyset$
for~$w\ne v$.  Then we perform the surgery on these circles to obtain
a manifold~$M$. Clearly,~$\pi_1(M)=\pi$. We show that~$\cat(M)\le 3$,
and so~$\cat M =3$ by \theoref{th:main}.

As usual, the surgery process yields an~$(n+1)$-manifold~$X$ with
$\partial X=M\sqcup M'$.  Here~$X$ is the space obtained from
$M'\times I$ by attaching handles~$D^2\times D^{n-1}$ of index 2 to
$M'\times 1$ along the above circles. We note that~$\cat(X)\le 3$.

On the other hand, by duality,~$X$ can be obtained from~$M \times I$
by attaching handles of index~$n-1$ to the boundary component of~$M
\times I$.  In particular, the inclusion~$f: M \to X$ induces an
isomorphism of the homotopy groups of dimension~$\le n-3$ and an
epimorphism in dimension~$n-2$.  Hence the map
\[
\Omega f:\Omega M\to\Omega X
\]
induces isomorphisms in dimensions~$\le n-4$ and an epimorphism in
dimension~$n-3$.  Thus,~$\pi_i(\Omega f)=0$ for~$i\le n-3$.

In order to prove the bound~$\cat M\le 3$, it suffices to show that
the Ganea-\v Svarc fibration~$p_3:G_3(M)\to M$ has a section.
Consider the commutative diagram
\[
\CD
G_3M @>q> >Z @>f'>> G_3(X)\\
@Vp_M^3VV @Vp'VV @ VVp_3^XV\\
M @= M @>f>> X\\
\endCD
\]
where the right-hand square is the pull-back diagram
and~$f'q=G_3(f)$. Note that~$q$ is uniquely determined.
Since~$\cat(X)\le 3$, by \theoref{t:ganea} there is a section~$s:X\to
G_3(X)$.  It defines a section~$s':M\to Z$ of~$p'$. It suffices to
show that the map~$s':M\to Z$ admits a homotopy lifting~$h: M \to
G_3M$ with respect to~$q$, i.e. the map~$h$ with ~$qh\cong
s'$. Indeed, we have
\[
p_M^3h=p'qh\cong p's'=1_M
\]
and so~$h$ is a homotopy section of~$p_3^M$. Since the latter is a
Serre fibration, the homotopy lifting property yields an actual
section.

Let~$F_1$ and~$F_2$ be the fibers of fibrations~$p_3^M$ and~$p'$,
respectively. Consider the commutative diagram generated by the homotopy exact
sequences
of the Serre fibrations~$p_3^M$ and~$p'$:
\[
\CD
\pi_i(F_1) @>>> \pi_i(G_3(M)) @>(p_3^M)_*>> \pi_i(M) @>>>
\pi_{i-1}(F_1) @>>>\cdots\\
 @VV\phi_*V @ VVq_*V @VV=V @VV\phi_*V @.\\
\pi_i(F_2) @>>> \pi_i(Z) @>(p')_*>> \pi_i(M) @>>>\pi_{i-1}(F_2)@>>>
\cdots .\\
\endCD
\]
Note that we have
\[
\phi=\Omega (f)\ast \Omega(f)\ast\Omega(f)\ast \Omega(f).
\]

By \propref{p:join} and since~$\pi_i(\Omega f)=0$ for~$i\le n-3$, we
conclude that~$\pi_i(\phi)=0$ for~$i\le n-3+3=n$.  Hence~$\phi$
induces an isomorphism of the homotopy groups of dimensions~$\le n-1$
and an epimorphism in dimension~$n$.  By the Five Lemma we obtain
that~$q_*$ is an isomorphism in dimensions~$\le n-1$ and an
epimorphism in dimension~$n$. Hence the homotopy fiber of~$q$
is~$(n-1)$-connected. Since~$\dim M=n$, the map~$s'$ admits a homotopy
lifting~$h:M\to G_3(M)$.
\end{proof}

\begin{cor}
\label{c:real}
Given a finitely presented group~$\pi$ and non-negative integer
numbers~$k, l$ there exists a closed manifold~$M$ such
that~$\pi_1(M)=\pi$, while~$\cat M=3+k$ and~$\dim M=5+2k+l$.
\end{cor}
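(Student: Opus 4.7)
The plan is to take $M := N\times \C P^k$, where $N$ is a closed $(5+l)$-manifold supplied by \theoref{t:high} (applicable since $5+l\ge 5$), so that $\pi_1(N)=\pi$ and $\cat N=3$. Since $\C P^k$ is simply connected, $\pi_1(M)=\pi$, while $\dim M=(5+l)+2k=5+2k+l$, as required. The upper bound $\cat M\le \cat N + \cat \C P^k=3+k$ is the standard product inequality for LS category.

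For the matching lower bound, I would exhibit a nonzero top-dimensional cohomology class on $M$ of category weight at least $3+k$ and invoke \propref{prop:swgtprops}(1). The proof of \theoref{t:high} furnishes a nonzero class $w\in H^{5+l}(N;\scr C)$ with $\wgt(w)\ge 3$: when $\pi$ is non-free, take $w:=f^*u\cup v$ as in the proof of \theoref{th:main}, so that $\wgt(w)\ge \wgt(f^*u)+\wgt(v)\ge 2+1$ by \propref{prop:swgtprops}(2,3,4); when $\pi$ is free, the manifold $N\cong N'\times S^{l+2}$ from \theoref{t:high} has ordinary $\Z$-cuplength $3$. On the other factor, the generator $y\in H^2(\C P^k;\Z)$ has $\wgt(y^k)\ge k$ by \propref{prop:swgtprops}(3). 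Denoting the projections by $\pi_N:M\to N$ and $\pi_{\C P^k}:M\to\C P^k$, I would form
\[
W := \pi_N^*(w)\cup \pi_{\C P^k}^*(y^k) \in H^{5+l+2k}(M;\pi_N^*\scr C),
\]
which by \propref{prop:swgtprops}(2,3) has category weight at least $3+k$, provided $W\ne 0$.

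The main obstacle is verifying that $W\ne 0$, a Künneth-type statement in the presence of a local coefficient system pulled back from $N$. I would pair $W$ against the fundamental class $[M]=[N]\times[\C P^k]$ via the cross-product formula
\[
[M]\cap W = ([N]\cap w) \times ([\C P^k]\cap y^k),
\]
which reduces the problem to the non-vanishing of each factor separately. The factor $[\C P^k]\cap y^k=1\in\Z$ is clear, while $[N]\cap w\ne 0$ is precisely the non-triviality of the top class $w$ that was established in the proofs of \theoref{th:main} and \theoref{t:high} in the two cases above. Thus $W\ne 0$, yielding $\cat M\ge 3+k$, which together with the upper bound completes the proof.
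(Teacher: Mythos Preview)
Your proof is correct and follows the same strategy as the paper: take the manifold $N$ from \theoref{t:high} with its detecting element of category weight~$3$, and multiply by a simply-connected factor of LS category~$k$ to raise both the dimension and the category by the right amounts. The paper uses $(S^2)^k$ and cites~\cite{R2} for the fact that a detecting element survives under products with spheres, whereas you use $\C P^k$ and verify the non-vanishing of the cross-product class directly via the cap-product formula; these are cosmetic variations of the same argument.
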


\begin{proof}
By \theoref{t:high}, there exists a manifold~$N$ such that
$\pi_1(M)=\pi$,~$\cat M=3$ and~$\dim M=5+l$. Moreover, this manifold
$N$ possesses a detecting element, i.e. a cohomology class whose
category weight is equal to~$\cat N=3$. For~$\pi$ free this follows
since the cuplength of~$N$ is equal to 3, for other groups we have the
detecting element~$f^*u\cup v$ constructed in the proof of
\theoref{th:main}. If a space~$X$ possesses a detecting element then,
for every~$k>0$, we have~$\cat (X\times S^k) =\cat X+1$ and~$X \times
S$ possesses a detecting element, \cite{R2}. Now, the manifold
$M:=N\times (S^2)^k$ is the desired manifold.
\end{proof}

Generally, we have a question about relations between the category,
the dimension, and the fundamental group of a closed manifold. The
following proposition shows that the situation quite intricate.

\begin{prop}
\label{p:lense}
Let $p$ be an odd prime. Then there exists a closed $(2n+1)$-manifold
with $\cat M=\dim M$ and $\pi_1(M)=\Zp$, but there are no closed
$2n$-manifolds with $\cat M=\dim M$ and $\pi_1(M)=\Zp$.
\end{prop}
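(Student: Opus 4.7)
The plan splits into two parts: constructing the required $(2n+1)$-manifold and ruling out the $2n$-dimensional case. For the existence part, I would take the standard lens space $L^{2n+1}(p) := S^{2n+1}/\Zp$, which has $\pi_1 = \Zp$ and dimension $2n+1$. To compute $\cat L^{2n+1}(p)$, I would use category weight: let $f : L^{2n+1}(p) \to K(\Zp,1)$ be the classifying map, let $x \in H^1(K(\Zp,1); \Z/p)$ be the canonical generator, and set $u = f^*(x)$ and $y = \beta(u) = f^*(\beta(x))$. Since $x$ and $\beta(x)$ live in positive-degree cohomology of an Eilenberg--MacLane space, \propref{prop:swgtprops}(4) gives $\wgt(u) \ge 1$ and $\wgt(y) \ge 2$. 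The standard computation of mod-$p$ cohomology of lens spaces shows $u \cup y^n \ne 0$ in $H^{2n+1}(L^{2n+1}(p); \Z/p)$, so \propref{prop:swgtprops}(3) yields $\wgt(u \cup y^n) \ge 2n+1$. Combined with \propref{prop:swgtprops}(1) and the universal bound $\cat L \le \dim L$, this forces $\cat L^{2n+1}(p) = 2n+1$.

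For the non-existence part, I would argue by contradiction: assume $M^{2n}$ is a closed manifold with $\pi_1(M) = \Zp$ and $\cat M = 2n$. Since $p$ is odd, $\Hom(\Zp, \Z/2) = 0$, so $w_1(M) = 0$ and $M$ is orientable. By \theoref{t:berstein}, the class $u_M^{2n} = f^*(\ber^{2n})$ is non-zero in $H^{2n}(M; I(\Zp)^{\otimes 2n})$, where $f : M \to K(\Zp,1)$ classifies $\pi_1$. By Poincar\'e duality with local coefficients (\theoref{th:PD}), the cap product $[M] \cap u_M^{2n}$ is a non-zero element of $H_0(M; I(\Zp)^{\otimes 2n})$.

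The key step is the projection formula $f_*([M] \cap f^*\alpha) = f_*[M] \cap \alpha$, applied with $\alpha = \ber^{2n}$. Since $M$ is orientable, $f_*[M]$ lies in $H_{2n}(\Zp; \Z)$, and the standard resolution for cyclic groups gives $H_{2n}(\Zp; \Z) = 0$ for $n \ge 1$. Hence $f_*[M] = 0$, so $f_*([M] \cap u_M^{2n}) = 0$. On the other hand, $f$ induces an isomorphism on $\pi_1$, so $f_* : H_0(M; A) \to H_0(K(\Zp,1); A)$ is an isomorphism of $\Zp$-coinvariants for every $\Z[\Zp]$-module $A$. This forces $[M] \cap u_M^{2n} = 0$, contradicting the non-vanishing obtained above, and so no such $M^{2n}$ can exist. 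The main obstacle I expect is justifying the projection formula carefully in the local-coefficient setting, but this is a routine extension of the classical identity and fits directly into the cap-product framework set up in Section~\ref{local}.
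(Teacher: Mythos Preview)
Your argument is correct, and both halves use essentially the same ingredients as the paper, though packaged differently.

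For the $(2n+1)$-dimensional example you spell out the category-weight computation for the lens space $S^{2n+1}/\Zp$ explicitly, whereas the paper simply names this quotient and leaves the equality $\cat=\dim$ as a known fact; your version is more self-contained but not different in substance.

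For the non-existence in even dimension the key input in both proofs is the vanishing $H_{2n}(\Zp;\Z)=0$, together with the orientability of $M$ forced by $p$ being odd. The paper uses this vanishing via obstruction theory and Poincar\'e duality (citing \cite{Bab1}) to compress the classifying map $f:M\to K(\Zp,1)$ into the $(2n-1)$-skeleton, concludes that $M$ is inessential, and then invokes \cite{KR1} to get $\cat M<2n$. You instead stay inside the machinery already developed in the paper: assuming $\cat M=2n$, \theoref{t:berstein} gives $u_M^{2n}\ne 0$, Poincar\'e duality gives $[M]\cap u_M^{2n}\ne 0$, and the projection formula together with $f_*[M]=0$ and the fact that $f_*$ is an isomorphism on $H_0$ with local coefficients yields the contradiction. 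Your route avoids the external references to obstruction-theoretic compression and to the essentiality criterion, at the price of checking the projection formula in the twisted setting; as you note, that check is routine in the sheaf-theoretic framework of Section~\ref{local}. The two arguments are really dual views of the same phenomenon: ``$f$ compresses to a lower skeleton'' and ``$f_*[M]=0$'' are equivalent ways of exploiting $H_{2n}(\Zp;\Z)=0$.
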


\begin{proof}
An example of $(2n+1)$-manifold is the quotient space $S^{2n+1}/\Zp$
with respect to a free $\Zp$-action on $S^{2n+1}$. Now, given a
$2n$-manifold with $\pi_1(M)=\Zp$, consider a map $f:M \to K(\Zp,1)$
that induces an isomorphism of fundamental groups. Since
$H_{2n}(K(\Zp,1))=0$, it follows from the obstruction theory and
Poincar\'e duality that $f$ can be deformed into the $(2n-1)$-skeleton
of $K(\Zp,1)$, cf.~\cite[Section 8]{Bab1}. Hence, $M$ is not
$2n$-essential, and thus $\cat M <2n$ \cite{KR1}.
\end{proof}


\end{document}